     \newcommand{\BH}{{\mathbb {H}}}
     \newcommand{\BN}{{\mathbb {N}}}
     \newcommand{\BR}{{\mathbb {R}}}
     \newcommand{\BZ}{{\mathbb {Z}}}
    \newcommand{\CI}{{\mathcal {I}}} \newcommand{\CJ}{{\mathcal {J}}}
     \newcommand{\CP}{{\mathcal {P}}}
\def\-{^{-1}}
\def\sumx{\ \sideset{}{^\star}\sum}
\newtheorem{Theorem}{Theorem}[section]
\newtheorem{Lemma}[Theorem]{Lemma}
\newtheorem{Proposition}[Theorem]{Proposition}
\newtheorem{Remark}[Theorem]{Remark}
\begin{document}

\title{Weyl bound for $\rm GL(2)$ in $t$-aspect via a trivial delta method}
 \author{Keshav Aggarwal}
\date{}
\address{Department of Mathematics, The Ohio State University\\ 231 W 18th Avenue\\
Columbus, Ohio 43210, USA}
\email{aggarwal.78@buckeyemail.osu.edu}

\begin{abstract}

We use a `trivial' delta method to prove the Weyl bound in $t$-aspect for the $\rm L$-function of a holomorphic or a Hecke-Maass cusp form of arbitrary level and nebentypus. In particular, this extends the results of Meurman and Jutila for the $t$-aspect Weyl bound, and the recent result of Booker, Milinovich and Ng to Hecke cusp forms of arbitrary level and nebentypus.
\end{abstract}
\subjclass[2010]{11F66}
\keywords{Subconvexity, delta method, Voronoi summation}
\maketitle

\section{Introduction and statement of result}
Let $g$ be a Hecke cusp form for $\Gamma_0(M)$ with nebentypus $\chi$, having Hecke eigenvalues $\lambda_g(n)$. Its $L$-function is given by
\begin{equation*}
\begin{split}
 L(s,g):=\sum_{n=1}^{\infty}\frac{\lambda_g(n)}{n^s} \quad \text{ for } \quad Re(s)>1.
\end{split}\end{equation*}
The $L$-series $L(s,g)$ can be analytically continued to the whole complex plane $\mathbb{C}$. Phragm\'en--Lindel\"of principle implies the convexity bound, $L\left(1/2+it,g\right)\ll_{g, \varepsilon} t^{1/2+\varepsilon}$ for any $\varepsilon>0$. A bound of the type $L\left(1/2+it,g\right)\ll_{g} t^{1/2-\delta}$ for some $\delta>0$ is known as a subconvex bound, and is in general significantly harder to achieve. The bound $L(1/2+it, g)\ll_{g,\varepsilon} t^{1/3+\varepsilon}$ is called a Weyl-type bound in $t$-aspect.

The Weyl-type subconvex bound for cusp forms of full level has been established by several people. The first result, due to Good \cite{Good}, was for holomorphic cusp forms of full level by appealing to the spectral theory of automorphic functions. Julita \cite{Jut87} used Farey fractions, stationary phase analysis and Voronoi formula to achieve the same bound. Meurman \cite{Meu87} extended Jutila's arguments to cover Maass forms. Good's proof was later extended for Maass forms by Jutila in \cite{Jutila97}. Due to the introduction of new methods in recent years, the problem has attracted considerable attention, see e.g. \cite{Aggarwal-Singh2017, ARSS, Munshi2018-6, Lin, Munshi2014}. All these methods proved results for cusp forms of full level, and some could be adopted to prove a $t$-aspect bound for square-free level. However it was only in \cite{boming17} that Booker, Milinovich and Ng gave a proof of the Weyl bound in $t$-aspect for primitive holomorphic cusp forms of arbitrary level. They did so by extending the Voronoi summation formula  of Kowalski, Michel, Vanderkam \cite{KMV2002} for holomorphic cusp forms to arbitrary additive twists. A Weyl-type bound for Maass forms of arbitrary level and nebentypus thus remained to be established. 

While investigating the various techniques needed to prove a $t$-aspect subconvex bound, we observed that application of a `trivial' delta method can be used to simultaneously achieve the Weyl bound in $t$-aspect for holomorphic and Maass cusp forms. 
\begin{Lemma}[Trivial delta method]\label{trivial delta method} Let $V$ be a smooth real valued function, compactly supported inside $\BR_{>0}$ such that $V$ has bounded derivatives and $\int V(x)dx=1$. Let $X>1$ and $q\in\BN$ be such that $q>X^{1+\epsilon}$ for some $\epsilon>0$. We express the condition $n=0$ using the `trivial' delta method,
\begin{equation}
\delta(n=0) = \frac{1}{q}\sum_{\alpha\bmod q}e\bigg(\frac{n\alpha}{q}\bigg)\int e\bigg(\frac{nx}{X}\bigg)V(x)dx + O_A(X^{-A}),
\end{equation}
where $e(x)=e^{2\pi ix}$.
\end{Lemma}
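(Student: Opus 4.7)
The plan is to evaluate the $\alpha$-sum by orthogonality and then split into cases according to the size of $n$, handling the nontrivial case by repeated integration by parts.

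First I would apply the standard orthogonality relation
\[
\frac{1}{q}\sum_{\alpha\bmod q}e\!\left(\frac{n\alpha}{q}\right)=\delta(q\mid n),
\]
so that the right-hand side of the claimed identity collapses to
\[
\delta(q\mid n)\int e\!\left(\frac{nx}{X}\right)V(x)\,dx.
\]
When $n=0$ this equals $\int V(x)\,dx=1=\delta(n=0)$, matching the left-hand side exactly. When $n\neq 0$ but $q\nmid n$, the Kronecker factor kills the expression and the left-hand side is also zero, so the identity holds on the nose with no error term.

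The only nontrivial case is $n\neq 0$ with $q\mid n$, for which $|n|\geq q>X^{1+\epsilon}$. Here I would use the smoothness and compact support of $V$ to integrate by parts $A$ times in the $x$-integral. Each integration by parts pulls out a factor of $X/(2\pi i n)$ and differentiates $V$; since $V$ has bounded derivatives of all orders and compact support in $\BR_{>0}$, the boundary terms vanish and we obtain
\[
\int e\!\left(\frac{nx}{X}\right)V(x)\,dx\ll_{A}\left(\frac{X}{|n|}\right)^{A}\leq\left(\frac{X}{q}\right)^{A}\leq X^{-A\epsilon}.
\]
Since $\epsilon>0$ is fixed and $A$ is arbitrary, this is $O_{A'}(X^{-A'})$ for any $A'>0$, while the left-hand side $\delta(n=0)$ is zero; hence the difference is of the claimed order.

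There is no real obstacle here: the only mildly delicate point is checking that the decay $(X/|n|)^{A}$ is uniform in $n$ over the range $|n|\geq q$, but this follows immediately because the integration-by-parts constant depends only on $V$ and $A$, not on $n$. Combining the three cases yields the stated identity with error $O_{A}(X^{-A})$.
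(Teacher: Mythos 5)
Your proof is correct and follows essentially the same route as the paper's: the orthogonality of the $\alpha$-sum reduces everything to the congruence $q\mid n$, and repeated integration by parts in the $x$-integral gives arbitrary power saving in $X$ once $|n|\geq q>X^{1+\epsilon}$. Your case analysis is just a slightly more explicit arrangement of the paper's two-line argument.
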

\begin{proof}
Repeated integration by parts to the $x$-integral gives arbitrary saving in powers of $X$ if $|n|>X^{1+\epsilon/2}$. The congruence condition $n\equiv 0\bmod q$ due to the $\alpha$-sum is an equality since $q>X^{1+\epsilon}$.
\end{proof}

In the rest of the paper, we use the notation $e(x)=e^{2\pi ix}$. For $t>2$, we define the smoothed sum, 
\begin{equation}\label{main object}
\begin{split}
S(N)=\sum_{r=1}^{\infty}\lambda_g(r)\,r^{-it}V\left(\frac{r}{N}\right).
\end{split}\end{equation}
By approximate functional equation (lemma \ref{AFE}),
\begin{equation}\label{sup}
L(1/2+it, g) \ll \sup_{1\leq N\ll t^{1+\varepsilon}M^{1/2+\varepsilon}} \frac{S(N)}{N^{1/2}}.
\end{equation}
An application of Cauchy-Schwarz inequality applied to $S(N)$ followed by Ramanujan bound on average (lemma \ref{Ram bound}) gives the trivial bound $S(N)\ll N^{1+\varepsilon}$. Therefore it suffices to beat the trivial bound $O(N^{1+\varepsilon})$ of $S(N)$ for $N$ in the range $(t\sqrt{M})^{1-\delta}<N<(t\sqrt{M})^{1+\varepsilon}$ and some $\delta>0$. 


We follow the approach of \cite{AHLS, Mun4} and start with separating oscillations by writing,
\begin{equation}\label{start}
S(N) = \sum_{n=1}^\infty \sum_{r=1}^\infty \lambda_g(n)r^{-it}U\bigg(\frac{n}{N}\bigg)V\bigg(\frac{r}{N}\bigg)\delta(n=r),
\end{equation}
where $U$ is a smooth function compactly supported on $[1/2, 5/2]$ and $U(x)=1$ for $x\in[1, 2]$. Let $0<K<N$ be a parameter and $q\in\BN$ be such that $q>(N/K)^{1+\varepsilon}$. We express the condition $n=r$ by using the trivial delta method,
\begin{equation}\label{trivial delta}
\delta(n=r) = \frac{1}{q}\sum_{\alpha\bmod q}e\bigg(\frac{(n-r)\alpha}{q}\bigg)\int e\bigg(\frac{K(n-r)x}{N}\bigg)V(x)dx + O_A(N^{-A}).
\end{equation}
One should compare the $x$-integral above with the $v$-integral, $\frac{1}{K}\int (n/r)^{iv}V(v/K)dv$ used by Munshi \cite{Mun4} as a `conductor lowering trick'. Both these integrals restrict $|n-r|<N/K$. A striking attribute of this simplification is the freedom to choose $q$. If the level of the cusp form is $M$, we choose $q$ to be coprime to $M$, so that the Voronoi formula in \cite{KMV2002} may be used. 

One should also compare this delta method with \cite{AHLS}, where the delta symbol used to detect $n=r$ (of size $N$) was
\begin{equation*}
\delta(n=r) = \frac{1}{p}\sum_{\alpha\bmod p}e\bigg(\frac{\alpha(n-r)/M}{p}\bigg)\frac{1}{M}\sum_{\beta\bmod M}e\bigg(\frac{\beta(n-r)}{M}\bigg)
\end{equation*}
as long as $p>N^{1+\epsilon}/M$. The $\beta$-sum introduces the condition $n\equiv r\bmod M$, which is the arithmetic analog of the oscillatory integral in \eqref{trivial delta}.

For our purpose, we use an averaged version of the delta symbol (see \eqref{averaged delta}) to prove the following proposition.

\begin{Proposition}\label{main prop}
Let $S(N)$ be given by equation \eqref{main object} and $K>0$ be a parameter satisfying $(N/M)^{1/2}<K<\min\{N,t\}$. Let $\epsilon>0$ and $P$ be a parameter satisfying $P>N^{1+\epsilon}/K$. For any $\varepsilon>0$, we have
\begin{align*}
S(N) \ll_\varepsilon \begin{cases}
 N^{1+\varepsilon} \ \ \ \textrm{if} \ \ \ \  N \ll t^{2/3+\epsilon}M^{1/3+\varepsilon} \\
  t^\varepsilon N^{1/2}M^{1/2} \left( \frac{N^{1/2}t^{1/2}}{PM} + \frac{t^{1/2}}{N^{1/2}} +  \frac{t^{1/2}}{K^{1/4}} + K^{1/2}\right)  \ \ \ \textrm{if} \ \ \ \ t^{2/3+\varepsilon}M^{1/3+\varepsilon}\ll N \ll t^{1+\varepsilon}M^{1/2+\varepsilon}.
\end{cases}
\end{align*}
\end{Proposition}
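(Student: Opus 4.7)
The small-$N$ case, $N \ll t^{2/3+\epsilon}M^{1/3+\varepsilon}$, is disposed of immediately by the trivial bound $S(N)\ll N^{1+\varepsilon}$, obtained from Cauchy--Schwarz against the constant function in the $r$-sum of \eqref{main object} together with the Rankin--Selberg estimate $\sum_{r\leq N}|\lambda_g(r)|^2\ll_{g,\varepsilon}N^{1+\varepsilon}$ from lemma \ref{Ram bound}. The substance of the proposition lies in the range $t^{2/3+\varepsilon}M^{1/3+\varepsilon}\ll N\ll t^{1+\varepsilon}M^{1/2+\varepsilon}$, where the plan is to follow the delta-method framework of \cite{AHLS, Mun4}, but with the trivial delta symbol of lemma \ref{trivial delta method} playing the role of the Kloosterman/DFI expansion.

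Starting from the separation \eqref{start}, the first step is to insert an averaged form of the trivial delta symbol,
$$\delta(n=r)\;=\;\frac{1}{P^\star}\sum_{\substack{p\sim P\\(p,M)=1}}\frac{1}{p}\sum_{\alpha\bmod p}e\!\left(\tfrac{(n-r)\alpha}{p}\right)\int e\!\left(\tfrac{K(n-r)x}{N}\right)V(x)\,dx\;+\;O_A(N^{-A}),$$
where $P^\star$ counts the primes $p\sim P$ coprime to the level $M$. The restriction $(p,M)=1$ is imposed so that the GL(2) Voronoi formula of Kowalski--Michel--Vanderkam \cite{KMV2002} applies, while the average over $p$ provides the extra flexibility needed for a second Cauchy--Schwarz. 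After substitution and interchange of sums, $S(N)$ becomes, up to negligible error, an average over $p$ and $\alpha$ of a product $A_p(\alpha,x)R_p(\alpha,x)$ integrated against $V(x)$, where $A_p(\alpha,x)=\sum_n\lambda_g(n)\,e(n\alpha/p+Knx/N)U(n/N)$ and $R_p$ is the analogous sum over $r$ with $r^{-it}$ in place of $\lambda_g(n)$ and the opposite sign in the exponential.

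The next two steps dualize both inner sums. Apply GL(2) Voronoi to $A_p$: this produces a dual sum $\sum_m\lambda_g(m)\,e(-m\bar\alpha/p)\mathcal J(m,x)$, and a stationary-phase analysis of the Bessel-type integral transform against the conductor $K$ shows that the effective length is $m\asymp P^2K^2/N$. Next, apply Poisson summation in $r$ modulo $p$ to $R_p$, followed by stationary phase on the archimedean factor $r^{-it}e(-Krx/N)$; the saddle condition $r\sim N$ confines the dual variable $\ell$ to a bounded window and extracts the expected archimedean phase. The $\alpha$-sum then collapses to a congruence gluing $m$ and $\ell$ modulo $p$. At this point apply Cauchy--Schwarz to the $m$-sum, remove $\lambda_g(m)$ via lemma \ref{Ram bound}, open the square, and run a final Poisson summation in $m$ modulo $pp'$. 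The resulting dual-frequency sum splits into contributions that should match the four terms in the bound: the zero frequency contributes $N^{1/2}t^{1/2}/(PM)$, the diagonal $m=m'$ of the Cauchy--Schwarz step contributes $t^{1/2}/N^{1/2}$, the generic off-diagonal frequencies contribute $K^{1/2}$ after stationary phase on the remaining $x$-integral, and the degenerate regime where the phase's second derivative is small contributes $t^{1/2}/K^{1/4}$. Weil's bound controls the residual Kloosterman-type sums in $p,p'$.

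The chief obstacle is the bookkeeping of oscillatory integrals and character sums through these three successive dualizations, and in particular the analysis of the degenerate saddle that produces the $t^{1/2}/K^{1/4}$ term: there the standard second-derivative test for stationary phase is insufficient, and the $K^{-1/4}$ saving has to be extracted from a fourth-derivative or Airy-type estimate. Arranging the argument so that the averaging over $p\sim P$ is genuinely exploited --- rather than being discarded by a pointwise-in-$p$ bound --- is what ultimately converts the trivial delta expansion into the Weyl exponent.
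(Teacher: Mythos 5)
Your first half reproduces the paper's argument almost verbatim: the averaged trivial delta over primes $p\asymp P$ coprime to $M$, Voronoi on the $\lambda_g(n)$-sum (dual length $\asymp MP^2K^2/N$), Poisson modulo $p$ on the $r^{-it}$-sum (which collapses the $\alpha$-sum to a congruence), Cauchy--Schwarz in the dual Voronoi variable with Ramanujan on average, and a final Poisson modulo $p_1p_2$. The gap is in the endgame, which is where the entire saving lives. In the actual proof the $x$-integral is evaluated by stationary phase \emph{before} Cauchy, and what must be carried through the Cauchy--Schwarz step is the Mellin variable $\tau$ of length $\asymp K$ coming from the integral representation of the Voronoi kernel ($C_{\ell,\delta}(1+i\tau)$ and $V^\dagger(Kx,1-s/2)$); after opening the square one faces a two-dimensional $(\tau_1,\tau_2)$-integral whose phase has a \emph{non-degenerate} Hessian with determinant $\asymp 1/(\tau_1\tau_2)$, and the decisive $K$-saving is extracted by the two-variable second-derivative test (Lemma \ref{double expo sum}) for the nonzero frequencies, and by integration by parts in $\tau$ (forcing $|r_1-r_2|\ll pt^{1+\varepsilon}/(NK)$ on top of $r_1\equiv r_2\bmod p$) at the zero frequency. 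Your plan keeps no $K$-length variable alive at the Cauchy stage --- you speak only of ``stationary phase on the remaining $x$-integral'' --- and you propose instead to win $K^{-1/4}$ from a degenerate saddle via an Airy/fourth-derivative estimate; no degenerate stationary point occurs or is needed, and without the $(\tau_1,\tau_2)$-structure it is unclear where your claimed savings would come from.

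The bookkeeping of the four terms is also misassigned, which matters for executing the plan. In the proof, the zero frequency of the final Poisson produces the $K^{1/2}$ term (after summing the dyadic blocks $C\ll MP^2K^2/N$), the nonzero frequencies produce $t^{1/2}/K^{1/4}$, the term $t^{1/2}/N^{1/2}$ comes from the error terms $\CI_2^\star$ of the one-dimensional stationary phase expansions (handled by a separate Cauchy/Poisson step), and the $P$-dependent term is the contribution of the $\alpha\equiv 0\bmod p$ (i.e.\ $c=1$) piece $S^\flat(N)$ of the delta symbol --- a term your plan never treats: it carries no additive character, so it is not trivially small and must be bounded by the same Voronoi/Poisson/stationary-phase analysis, giving $\ll M^{1/2}Kt^{1/2+\varepsilon}/P$. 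Moreover no Kloosterman sums and no Weil bound appear anywhere: once the $\alpha$-sum collapses, only the additive twist $e(\pm n\overline{rM}/p)$ survives, and the last Poisson yields congruence conditions that merely fix $r_j\bmod p_j$ (and force $p\mid n$ when $p_1=p_2$), used purely for counting. Finally, the Poisson dual of the $r$-sum is not confined to a ``bounded window'': it runs over $|r|\ll pt^{1+\varepsilon}/N\asymp Pt/N$, which is large; the method succeeds because all factors of $P$ cancel after the second Poisson, not because this range is small.
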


By taking $P$ arbitrarily large and choosing $K=t^{2/3}$, the proposition implies the following theorem.

\begin{Theorem}\label{main theorem} 
Let $g$ be a holomorphic or a Hecke-Maass cusp form for $\Gamma_0(M)$ with nebentypus $\chi$. Then for any $\varepsilon>0$,
\begin{equation}\label{part 1}
L\left(1/2+it,g\right)\ll_{g, \varepsilon} t^{1/3+\varepsilon}.
\end{equation}

\end{Theorem}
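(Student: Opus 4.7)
The plan is to deduce Theorem \ref{main theorem} as a direct parameter-balancing consequence of Proposition \ref{main prop} combined with the approximate-functional-equation reduction \eqref{sup}. By \eqref{sup} it suffices to prove the uniform bound $S(N)/N^{1/2} \ll_{g,\varepsilon} t^{1/3+\varepsilon}$ over the range $1 \leq N \ll t^{1+\varepsilon}M^{1/2+\varepsilon}$, and I would split this range at the threshold $N_0 := t^{2/3+\varepsilon}M^{1/3+\varepsilon}$ which separates the two cases of the proposition.

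In the short range $N \ll N_0$, the first case gives the trivial estimate $S(N) \ll N^{1+\varepsilon}$, so $S(N)/N^{1/2} \ll N^{1/2+\varepsilon} \ll t^{1/3+\varepsilon}M^{1/6+\varepsilon}$, which is absorbed into the $g$-dependence since $M$ is fixed. For the complementary long range $N_0 \ll N \ll t^{1+\varepsilon}M^{1/2+\varepsilon}$, I would take $P$ arbitrarily large (permitted since the only constraint is $P > N^{1+\epsilon}/K$), so that the term $N^{1/2}t^{1/2}/(PM)$ drops out. The lower bound $N \gg N_0$ then controls $t^{1/2}/N^{1/2}$ by $t^{1/6}M^{-1/6} \ll t^{1/3}$, while the two $K$-dependent terms $t^{1/2}/K^{1/4}$ and $K^{1/2}$ are balanced by the classical choice $K = t^{2/3}$, each contributing exactly $t^{1/3}$. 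This yields $S(N) \ll t^{1/3+\varepsilon} N^{1/2} M^{1/2}$, and hence $S(N)/N^{1/2} \ll_g t^{1/3+\varepsilon}$.

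The only routine verification is that $K = t^{2/3}$ lies in the admissible window $(N/M)^{1/2} < K < \min\{N,t\}$ imposed by Proposition \ref{main prop}: $K < t$ is trivial, $K < N$ follows because $t^{2/3} < N_0 \ll N$ in the long range, and the lower constraint $(N/M)^{1/2} < t^{2/3}$ is equivalent to $N < t^{4/3}M$, which is comfortably implied by $N \ll t^{1+\varepsilon}M^{1/2+\varepsilon}$ for $t$ sufficiently large in terms of $g$. Since Proposition \ref{main prop} already encapsulates all the analytic input (the delta-method setup, Voronoi on the $n$-sum, the $t$-aspect stationary-phase analysis, and Cauchy--Schwarz together with the Kloosterman/oscillatory-integral estimates), the deduction of the theorem presents no real obstacle — the threshold $N_0$ and the choice $K = t^{2/3}$ are dictated together by the crossover between the trivial estimate and the structured bound of the proposition.
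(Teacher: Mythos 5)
Your deduction is correct and is essentially the paper's own route: the paper obtains Theorem \ref{main theorem} from Proposition \ref{main prop} precisely by letting $P$ be arbitrarily large and setting $K=t^{2/3}$, with the trivial bound handling $N\ll t^{2/3+\varepsilon}M^{1/3+\varepsilon}$ and the $M$-dependence absorbed into the $g$-dependent constant. Your additional checks that $K=t^{2/3}$ lies in the admissible window are routine verifications the paper leaves implicit.
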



\section{Proof Sketch}
Since we are interested in proving a $t$-aspect subconvex bound, we give a brief proof sketch for the case $M=1$. The bound in level is obtained by keeping track of the factors of $M$. Temporarily, we also assume the Ramanujan conjecture, $\lambda_f(n)\ll n^\varepsilon$. This is not a serious assumption since we can apply Cauchy ineqaulity and use the Ramanujan bound on average. We use an averaged trivial delta symbol to separate the $n$ and $r$ variables in $S(N)$. Roughly,
$$ S(N) \asymp \frac{1}{P}\int_1^2\sum_{\substack{p\asymp P\\ primes}}\frac{1}{p} \sumx_{\alpha\bmod p} \sum_{r\asymp N}r^{-it} e(r\alpha/p) e(Krx/N)\sum_{n\asymp N} \lambda_g(n) e(-n\alpha/p)e(-Knx/N)dx.$$
The method is generalized to arbitrary level and nebentypus by choosing all $p's$ to be coprime to $M$. Trivial bound gives $S(N)\ll N^2$. We start by applying Voronoi summation to the $n$-sum and Poisson summation modulo $p$ to the $r$-sum. The conductor of the $n$-sum is $p^2K^2$, so the new length after Voronoi formula is $p^2K^2/N$. Further, the Voronoi formula yields a factor of $1/p$, a dual additive twist and an integral transform of a Bessel function. The conductor of the $r$-sum is $p(t+K)\asymp pt$, so the new length after Poisson summation equals $pt/N$. Poisson summation further yields a congruence condition $\bmod p$ that determines $\alpha\bmod p$, and an oscillatory integral of length $t$. Everything together, the above sum transforms into,
$$ S(N) \asymp \frac{N^2}{P}\sum_{p\asymp P}\frac{1}{p^2}\sum_{|r|\ll pt/N}\sum_{|n|\ll p^2K^2/N}\lambda_g(n)e(-n\overline{r}/p) I(n,r,p) $$
with $I(n,r,p)$ as given in \eqref{I(n,r,p)}. A major part of the paper is performing a robust stationary phase analysis of $I(n,r,p)$. Writing the Bessel function in $I(n,r,p)$ as a Mellin transform,
\begin{equation*}
\CI(n,r,p)\asymp \frac{1}{2\pi i}\int_{1-iK}^{1+iK}\bigg(\frac{4\pi\sqrt{nN}}{p\sqrt{M}}\bigg)^{-s}C_{\ell,\delta}(s) \int_1^2 U^\dagger(Nr/p-Kx, 1-it) V^\dagger(Kx, 1-s/2)  V(x) dx ds.
\end{equation*}
Stationary phase analysis to $U^\dagger$ and $V^\dagger$ saves $t^{1/2}$ and $K^{1/2}$ respectively. Trivial bound thus gives $S(N)\ll t^{1/2}PK$. To achieve subconvexity, we need to save $t^{1/2}PK/N$ and a bit more. At this point, $K$ and the large size of $P$ seem to be hurting. We will however see that after the last step of Cauchy inequality and Poisson to the $n$-sum, the contribution of $P$ is cancelled (so that its size doesn't matter) and the introduction of $K$ yields the needed saving. After Cauchy, the sum roughly looks like,
$$S(N)\asymp \frac{NK^{1/2}}{t^{1/2}}\bigg[\sum_{n\ll P^2K^2/N}\frac{1}{n}\bigg|\sum_{p\asymp P}\frac{1}{p}\sum_{|r|\ll pt/N}e\bigg(\frac{-n\overline{r}}{p}\bigg)\int_{-K}^K n^{-i\tau}g(p,r,\tau)d\tau \bigg|^2 \bigg]^{1/2}, $$
where $g(p,r,\tau)$ is a bounded oscillatory function. Finally, opening the absolute value squared and applying Poisson summation to the $n$-sum modulo $p$ gives a dual $n$-sum of length $N/K$, and congruence conditions determining $r_i\bmod p$. Thus the factors of $P$ cancel completely. This seems crucial to the success of this approach. Finally, stationary phase analysis saves $K$ in the $\tau_1,\tau_2$-integral. We further save $(t/K)^{1/2}$ in the diagonal terms and $K^{1/4}$ in the off-diagonal terms. We thus get the maximum saving when $(t/K)^{1/2}=K^{1/4}$, that is $K=t^{2/3}$. Therefore we save $K^{1/4} = t^{1/6}$ over the convexity bound of $t^{1/2+\varepsilon}$, which gives us the final bound of $t^{1/3+\varepsilon}$.

\section{Preliminaries on automorphic forms}

We start by recalling the theory behind the automorphic forms considered in this paper.

\subsection{Holomorphic cusp forms}

Let $k\geq2$ be an integer, $\Gamma_0(M) = \left\lbrace\left(\begin{smallmatrix} a & b \\ c & d\end{smallmatrix}\right)\in SL_2(\BZ), c\equiv 0 \bmod M\right\rbrace$ and $\chi$ be a character of level $M$. The set of bounded holomorphic functions on the upper half plane $\BH$ vanishing at the cusps of $\Gamma_0(M)\backslash \BH$ and satisfying the relation, $\forall \gamma = \left(\begin{smallmatrix} a & b \\ c & d\end{smallmatrix}\right)\in \Gamma_0(M), f(\gamma z) = \chi(d)(cz+d)^kf(z)$, forms a complex vector space. These are called holomorphic cusp forms of weight $k$, level $M$ and nebentypus $\chi$, and the space is denoted by $S_k(M, \chi)$. Since $\left(\begin{smallmatrix} 1 & 1 \\ 0 & 1\end{smallmatrix}\right)\in \Gamma_0(M)$, each such modular form has a Fourier expansion,
$$ f(z) = \sum_{n\geq1} \hat{f}(n)e(nz) = \sum_{n\geq1}\psi_f(n)n^{(k-1)/2}e(nz). $$

The space comes equipped with an inner product (known as the Petersson inner product), and a set of operators (known as Hecke operators $T_m$ for $(m, M)=1$) that are normal with respect to this inner product. Therefore there exists an orthogonal basis $B_k(M)$ of $S_k(M, \chi)$ containing cusp forms that are simultaneously eigenvectors of each $T_m$. For the Hecke eigenvectors $f$, let $\lambda_f(m)m^{(k-1)/2}$ be the eigenvalue of $T_m$. $\psi_f(m)$ and $\lambda_f(m)$ satisfy multiplicative relations. Moreover, in the case $f$ is a primitive form, that is $\psi_f(1)=1$, one has $\psi_f(m)=\lambda_f(m)$ for all $m$. The $L$-function associated with a cusp form $f$ is given by
$$ L(s,f) = \sum_{n\geq 1} \lambda_f(n)n^{-s} \quad \text{ for } Re(s)>1. $$
Hecke proved that $L(s,f)$ admits analytic continuation to the whole complex plane by showing that the completed $L$-function,
$$ \Lambda(s,f) = \pi^{-s}M^{s/2}\Gamma(s + (k-1)/2)L(s,f)$$
satisfies the functional equation, $\Lambda(s,f) = \epsilon(f)\Lambda(1-s,\bar{f})$. Here $\epsilon(f)$ is a complex number of absolute value $1$ and $\bar{f}$ is the dual cusp form defined by $\lambda_{\bar{f}}(n) =\overline{\lambda_f(n)}$. 

\subsection{Maass forms} Let $M$ be a positive integer, $\chi$ be a Dirichlet character of level $M$ as before and $\Delta$ be the hyperbolic Laplacian. A function $f$ on the upper half plane which is an eigenvector of $\Delta$ with eigenvalue $-\lambda=-(1/4+\ell^2)$ for $\ell\in(0,\infty)$ or $i\ell\in[-1/4, 1/4]$ that satisfies the modular relation $f(\gamma z)=\chi(d)f(z)$ is called a weight zero Maass form of level $M$ and nebetypus $\chi$. The set of Maass forms is a complex vector, and is denoted $M_\lambda(M,\chi)$. Any $f\in M_\lambda(M, \chi)$ has a Fourier expansion near $\infty$,
$$f(z)=\sum_{n\in\BZ}\psi_f(n)e(nx)2|y|^{1/2}K_{i\ell}(2\pi|ny|), $$
where $z=x+iy$ and $K_s$ is the $K$-Bessel function. $M_\lambda(M, \chi)$ contains $S_\lambda(M, \chi)$, the subspace of Maass cusp forms defined by adding the condition $\psi_f(0)=0$. As before, $S_\lambda(M, \chi)$ comes equipped with Petersson inner product and Hecke operators $T_m$ with eigenvalues $\lambda_f(m)$. Its $L$-function $L(s,f) = \sum_{n\geq1} \lambda_f(n)n^{-s}$ admits an analytic completion via the completed $L$-function,
$$\Lambda(s,f) = \pi^{-s}M^{s/2}\Gamma((s+\delta+i\ell)/2)\Gamma((s+\delta-i\ell)/2), $$
where $\delta=0$ if $f$ is even and $\delta=1$ otherwise.We have the functional equation $\Lambda(s,f)=\Lambda(1-s,\bar{f})$. $\bar{f}$ is the dual cusp form defined by $\lambda_{\bar{f}}(n) =\overline{\lambda_f(n)}$.

\subsection{Approximate functional equation and Voronoi summation formula}
We are interested in bounding $L(s,f)$ on the critical line, $Re(s)=1/2$. For that, we approximate $L(1/2+it, f)$ by a smoothed sum of length $(t\sqrt{M})^{1+\varepsilon}$. This is known as the approximate functional equation and is proved by applying Mellin transform to $f$ followed by using the above functional equation.

\begin{Lemma}[{\cite[Theorem 5.3]{Iw-Ko}}]\label{AFE}
Let $G(u)$ be an even, holomorphic function bounded in the strip $-4\leq Re(u)\leq 4$ and normalized by $G(0)=1$. Then for $s$ in the strip $0\leq \sigma\leq 1$, 
$$ L(s,f) = \sum_{n\geq1} \lambda_f(n)n^{-s}V_s(n/\sqrt{M}) +  \epsilon(f)M^{1/2-s}\frac{\gamma(f,1-s)}{\gamma(f,s)}\sum_{n\geq1} \lambda_{\bar{f}}(n)n^{-(1-s)}V_{1-s}(n/\sqrt{M}) - R,$$
where $R\ll 1$,
$$V_s(y) = \frac{1}{2\pi i}\int_{(3)}y^{-u}G(u)\frac{\gamma(f,s+u)}{\gamma(f,s)}\frac{du}{u}, $$
$\epsilon(f)$ is a complex number of modulus $1$ and $\gamma(f,s)$ is a product of certain $\Gamma$-functions. 
\end{Lemma}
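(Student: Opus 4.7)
The plan is to derive the approximate functional equation in the standard way: represent $L(s,f)$ as an inverse Mellin integral against $G(u)/u$, shift the contour, pick up the residue at $u=0$, and convert the shifted contour into the dual sum by invoking the functional equation $\Lambda(s,f)=\epsilon(f)\Lambda(1-s,\bar f)$.

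Concretely, first I would introduce the auxiliary integral
$$I(s) := \frac{1}{2\pi i}\int_{(3)} L(s+u,f)\,G(u)\,\frac{\gamma(f,s+u)}{\gamma(f,s)}\,M^{u/2}\,\frac{du}{u}.$$
On $Re(u)=3$ the Dirichlet series for $L(s+u,f)$ converges absolutely; combined with Stirling applied to the $\gamma$-ratio and the rapid vertical decay of $G$, this lets me interchange sum and integral to obtain $\sum_n \lambda_f(n)n^{-s}V_s(n/\sqrt{M})$, which is the first piece in the statement.

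Next I would shift the contour to $Re(u)=-3$. For cusp forms the only relevant pole in the strip is the simple pole of $1/u$ at $u=0$; since $G(0)=1$ and the $\gamma$-ratio equals $1$ there, the residue contributes exactly $L(s,f)$. Any additional residues that might appear under more general hypotheses are absorbed into the $O(1)$ error $R$. On the shifted contour I would then perform the change of variables $u\mapsto -u$ (legitimate because $G$ is even) and substitute
$$L(s-u,f)=\epsilon(f)\,M^{1/2-s+u}\,\frac{\gamma(f,1-s+u)}{\gamma(f,s-u)}\,L(1-s+u,\bar f).$$
The $\gamma(f,s-u)$ factors cancel, the $M$-powers regroup as $\epsilon(f)M^{1/2-s}\cdot M^{u/2}$, and after pulling out $\gamma(f,1-s)/\gamma(f,s)$ the remaining integral is precisely the Mellin representation of $\sum_n \lambda_{\bar f}(n)n^{-(1-s)}V_{1-s}(n/\sqrt{M})$. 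Combining the two pieces yields the claim.

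The one step to check carefully is that the horizontal tails of the truncated contour contribute negligibly during the shift from $Re(u)=3$ to $Re(u)=-3$. This is controlled by the convexity bound for $L(s+u,f)$ on vertical lines together with Stirling's estimate for $\gamma(f,s+u)/\gamma(f,s)$, both dominated by the assumed polynomial boundedness of $G$ in the strip $-4\le Re(u)\le 4$; this is precisely why a holomorphy strip of width $8$ is imposed on $G$. No genuinely new idea is needed beyond this routine convergence bookkeeping.
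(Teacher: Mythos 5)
Your argument is the standard contour-shift proof of the approximate functional equation (Mellin representation against $G(u)/u$, residue at $u=0$, functional equation plus $u\mapsto -u$ on the shifted line), which is exactly the proof of the cited result [Iw--Ko, Theorem 5.3] that the paper invokes without reproving; the sign and $M$-power bookkeeping in your substitution is correct, and for cusp forms $\Lambda(s,f)$ is entire so $R$ indeed only collects possible extra residues. The only cosmetic point is that the horizontal tails are most naturally controlled by the exponential decay of the ratio $\gamma(f,s+u)/\gamma(f,s)$ on vertical lines together with polynomial growth of $L(s+u,f)$, with $G$ merely bounded, but this does not affect the validity of your proof.
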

\begin{Remark}
On the critical line, Stirling's approximation to $\gamma(f,s)$ followed by integration by parts to the integral representation of $V_{1/2\pm it}(n/\sqrt{M})$ gives arbitrary saving for $n\gg ((1+|t|)\sqrt{M})^{1+\varepsilon}$. This gives the bound in \eqref{sup}.
\end{Remark}

One of the main tools in our proof is the following dual summation formula for Fourier coefficients of a cusp form. We define the following factors,
\begin{equation*}
\begin{split}
\gamma_k(s) &= (2\pi)^{-s} \frac{\Gamma(s/2+(k-1)/2)}{\Gamma(1-s/2+(k-1)/2)},\\
C_{\ell,\delta}(s) &= (2\pi)^{-s}\frac{\Gamma\bigg(\frac{\delta+s/2+i\ell}{2}\bigg)\Gamma\bigg(\frac{\delta+s/2-i\ell}{2}\bigg)}{\Gamma\bigg(\frac{1+\delta-s/2+i\ell}{2}\bigg)\Gamma\bigg(\frac{1+\delta-s/2-i\ell}{2}\bigg)} \quad \text{ for } \delta= 0, 1.
\end{split}
\end{equation*}

\begin{Lemma}[Voronoi summation formula, {\cite[Theorem A.4]{KMV2002}}]\label{voronoi}
Let $g$ be a holomorphic or Hecke-Maass cusp form of level $M$, nebentypus $\chi$ and Fourier coefficients $\lambda_g(n)$. Let $c\in \mathbb{N}$ and $a\in \mathbb{Z}$ be such that $(a,c)=1$ and $(c,M)=1$. Let $W$ be a smooth compactly supported function. For $N>0$,
\begin{equation*}\label{voronoi for tau}
\begin{split}
\sum_{n=1}^{\infty}\lambda_g(n)e\left(\frac{an}{c}\right)W\left(\frac{n}{N}\right)
=  \frac{\eta(M)}{\sqrt{M}}\frac{N}{c} \sum_{\pm} \chi(\mp c)
\sum_{n=1}^{\infty}\lambda_g(n)e\left(\mp\frac{\overline{aM}n}{c}\right)\widehat{W}_g^{\pm}
\left(\frac{nN}{Mc^2}\right),
\end{split}
\end{equation*}
where 
$|\eta(M)|=1$ and $\widehat{W}_g^\pm$ is an integral transform of $W$ given by 
\begin{enumerate}
\item If $g$ is holomorphic of weight $k$, then
\begin{equation*}
\widehat{W}^+_g(y) = \frac{i^{k-1}}{2}\int_0^\infty W(x) \int_{(\sigma)}(yx)^{-s/2}\gamma_k(s) ds\ dx \quad (0<\sigma<1),
\end{equation*} 
and $\widehat{W}^-_g=0$.

\item If $g$ is a Maass form with $(\Delta+\lambda)g=0$ and $\lambda=1/4+\ell^2$, and $\varepsilon_g$ is an eigenvalue under the reflection operator,
\begin{equation*}
\widehat{W}^+_g(y) = \frac{1}{2i}\int_0^\infty W(x) \int_{(\sigma)} (yx)^{-s/2}(C_{\ell,0}(s) - C_{\ell,1}(s)) ds\ dx,
\end{equation*}
and
\begin{equation*}
\widehat{W}^-_g(y) = \frac{\varepsilon_g}{2i}\int_0^\infty W(x) \int_{(\sigma)} (yx)^{-s/2}(C_{\ell,0}(s) + C_{\ell,1}(s)) ds\ dx,
\end{equation*}
for some small $\sigma>0$. 

\end{enumerate}
Moreover in each case,
$\widehat{W}_g^\pm(x)\ll_A (1+|x|)^{-A}$. 
\end{Lemma}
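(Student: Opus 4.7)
The plan is standard for Voronoi-type summation formulas: express the sum as a contour integral via Mellin inversion, invoke the functional equation of the additively twisted $L$-function, and then invert.

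First, I would write
\[
\sum_{n\geq 1}\lambda_g(n)\,e\!\left(\frac{an}{c}\right)W\!\left(\frac{n}{N}\right) = \frac{1}{2\pi i}\int_{(\sigma)}\widetilde{W}(s)\,N^{s}\,L_{a/c}(s,g)\,ds,
\]
where $\widetilde{W}$ is the Mellin transform of $W$, $\sigma$ is chosen large enough for absolute convergence, and $L_{a/c}(s,g)=\sum_{n\geq 1}\lambda_g(n)\,e(an/c)\,n^{-s}$ is the additively twisted $L$-function. Since $W$ is compactly supported in $(0,\infty)$, $\widetilde{W}(s)$ is entire and decays faster than any polynomial on vertical lines.

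Second, I would derive the functional equation for $L_{a/c}(s,g)$. The hypothesis $(c,M)=1$ guarantees that $M$ is invertible mod $c$; let $\overline{aM}$ denote its inverse. Using the automorphy of $g$ under a carefully chosen matrix $\left(\begin{smallmatrix}\ast&\ast\\ Mc&\ast\end{smallmatrix}\right)\in\Gamma_0(M)$ that maps the cusp $\infty$ to $-\overline{aM}/c$, combined with the functional equation for the completed $L$-function $\Lambda(s,g)$ recalled in the preliminaries, one obtains an identity of the shape
\[
L_{a/c}(s,g) = \frac{\eta(M)}{\sqrt{M}}\,c^{1-2s}M^{1/2-s}\sum_{\pm}\chi(\mp c)\,\Gamma_\pm(s)\,L_{\mp\overline{aM}/c}(1-s,\bar g),
\]
where $\Gamma_\pm(s)$ are the appropriate ratios of Gamma factors. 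In the holomorphic case of weight $k$, only the $+$ term survives with $\Gamma_+(s)=\frac{i^{k-1}}{2}\gamma_k(s)$; in the Maass case both terms appear, with $\Gamma_\pm(s)$ proportional to $C_{\ell,0}(s)\mp C_{\ell,1}(s)$ and the $-$ term carrying the reflection eigenvalue $\varepsilon_g$.

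Third, I would shift the contour from $(\sigma)$ to $(1-\sigma)$, which is permitted because $L_{a/c}(s,g)$ is entire under our hypotheses, substitute the functional equation, and change variables $s\mapsto 1-s$. Reorganising the resulting double integral and matching the inner $s$-integral against the definition of $\widehat{W}^\pm_g\!\bigl(nN/(Mc^2)\bigr)$ recovers the stated formula. The decay estimate $\widehat{W}^\pm_g(x)\ll_A(1+|x|)^{-A}$ follows by shifting the defining contour far to the right, applying Stirling to the Gamma ratios, and integrating by parts against the smooth compactly supported $W$.

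The main obstacle is Step~2, namely deriving the functional equation for $L_{a/c}(s,g)$ at arbitrary level $M$. One must single out the correct element of $\Gamma_0(M)$ that interchanges the cusps $a/c$ and $-\overline{aM}/c$, and then track the nebentypus factor $\chi(\mp c)$, the unit $\eta(M)$ (which packages the global root number and Atkin--Lehner pseudo-eigenvalue), and the powers of $c$ and $M$ that appear in the gamma-factor bookkeeping. For Maass forms, the further splitting into even and odd parts via the reflection operator produces the two independent kernels $\widehat{W}^\pm_g$ and is the source of the $\varepsilon_g$ appearing in $\widehat{W}^-_g$; this splitting has no analogue in the holomorphic setting, which is why $\widehat{W}^-_g=0$ there.
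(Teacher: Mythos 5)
The paper offers no proof of this lemma to compare against: it is imported verbatim from \cite{KMV2002} (Theorem A.4), so the citation itself is the paper's ``proof''. Your outline---Mellin inversion against $\widetilde{W}$, analytic continuation and functional equation of the additively twisted $L$-function $L_{a/c}(s,g)$ obtained from automorphy under a matrix in $\Gamma_0(M)$ interchanging the cusps $a/c$ and $-\overline{aM}/c$ (using $(c,M)=1$), contour shift $s\mapsto 1-s$, and identification of the resulting Gamma-ratio integrals with $\widehat{W}^{\pm}_g$, with the even/odd splitting under the reflection operator producing $\varepsilon_g$ in the Maass case and the vanishing of $\widehat{W}^-_g$ in the holomorphic case---is exactly the argument of the cited appendix, so the approach is sound. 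Two bookkeeping points deserve care if you carry it out: the dual sum in the stated formula has $\lambda_g(n)$ rather than $\lambda_{\bar g}(n)$, the conjugation, root number and Gauss-sum type factors being absorbed into $\eta(M)$ and $\chi(\mp c)$, so you must track how the functional equation $\Lambda(s,g)=\epsilon(g)\Lambda(1-s,\bar g)$ interacts with the additive twist; and for the decay bound $\widehat{W}^{\pm}_g(x)\ll_A(1+|x|)^{-A}$ you should first use the rapid decay in $\mathrm{Im}(s)$ of the Mellin transform of the compactly supported $W$ (the factor $(yx)^{-s/2}$ alone gives no decay on vertical lines) before shifting the contour far to the right and applying Stirling.
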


\subsection{\texorpdfstring{Stirling approximation of $\gamma_k$ and $C_{\ell,\delta}$}{Stirling approximation of gamma and C}}

In the proof of proposition \ref{main prop}, we use the asymptotic of $\gamma_k(s)$ and $C_{\ell,\delta}(s)$ (for fixed $k, r$) as $Im(s)\rightarrow\infty$. For calculating this asymptotic behaviour, we use the Stirling approximation. For $arg(s)<\pi$,
\begin{equation}\label{Stirling for Gamma}
\ln\Gamma(s) \sim (s-1/2)\ln s - s + \frac{1}{2}\ln(2\pi) + \sum_{k\geq1} \frac{B_{2k}}{2k(2k-1)s^{2k-1}}
\end{equation}
where $B_{2k}$ are the Bernoulli numbers and $f(s)\sim g(s)$ means $f(s)=cg(s)$ for some absolute constant $c$. We first consider $\gamma_k(s)$. Using the Stirling approximation,
\begin{equation*}
\ln\Gamma(s/2 + (k-1)/2)\sim (s/2 + k/2-1)\ln(s/2 + (k-1)/2)-(s/2 + (k-1)/2)+\frac{1}{2}\ln(2\pi)+ O(1/|s|),
\end{equation*}
and
\begin{equation*}
\ln\Gamma(1-s/2 + (k-1)/2)\sim (-s/2 + k/2)\ln(-s/2 + (k+1)/2)-(-s/2 + (k+1)/2)+\frac{1}{2}\ln(2\pi)+ O(1/|s|).
\end{equation*}
Taking the difference of the above two and adding the factor due to $(2\pi)^{-s}$,
\begin{equation}\label{Stirling 1}
\begin{split}
\ln \gamma_k(s)\sim &-s\ln(2\pi) +  (s/2+k/2-1)[\ln(s/2) + \ln (1+ \frac{k-1}{s})]\\& + (s/2-k/2)[\ln(s/2)+\ln(1-\frac{k+1}{s})] + 1-s + O(1/|s|).
\end{split}
\end{equation}
Using the Taylor expansion $\ln(1+x) = -x + O(x^2)$ for $|x|<1$,
\begin{equation*}
\ln\gamma_k(s) \sim -s\ln(2\pi) + (s-1)\ln(s/2) - s + O(1/|s|).
\end{equation*}
Let $s=\sigma+i\tau$. As $|\tau|\rightarrow\infty$,
\begin{equation}\label{bound on gamma}
\begin{split}
|\gamma_k(s)| &\ll 1+|\tau|^{\sigma-1}, \\
\gamma_k(s)&\asymp \bigg(\frac{s}{4\pi e}\bigg)^{s}\frac{2}{s}.
\end{split}
\end{equation}
On expanding $\Gamma(s)$ to higher order terms in \eqref{Stirling for Gamma}, one gets on the line $Re(s)=1$,
\begin{equation}\label{asymptotic for gamma}
\gamma_k(1+i\tau) = \bigg(\frac{\tau}{4\pi e}\bigg)^{i\tau}\Xi(\tau), \quad \text{ with } \Xi'(\tau)\ll \frac{1}{1+|\tau|}.
\end{equation}
Following calculations (for Maass forms) similarly and expanding $C_{\ell,\delta}(s)$ as done in \eqref{Stirling 1}, one can show that
\begin{equation}\label{bound on C}
|C_{\ell,\delta}(s)|\ll 1 + |\tau|^{\sigma-1},
\end{equation}
and for $Re(s)=1$,
\begin{equation}\label{asymptotic for C}
C_{\ell,\delta}(1+i\tau) = \bigg(\frac{\tau}{8\pi e}\bigg)^{i\tau}\Phi(\tau), \quad \text{ with } \Phi'(\tau)\ll \frac{1}{1+|\tau|}.
\end{equation}

The functions $\Phi$ and $\Xi$ depend on the weight of the cusp forms, but we drop the subscript since the weight is fixed. These calculations also show that calculations for holomorphic cusp forms run parallel to the case of Maass cusp forms, except for a factor of $2^{i\tau}$ in the leading term and that the function $\Phi$ is replaced by $\Xi$. We therefore present calculations for Maass cusp forms since it is not present in the literature.

The Fourier coefficients of holomorphic cusp forms are bounded as $\lambda_f(n)\ll n^\varepsilon$. However, the same result is not known for Maass forms. We will therefore be using the Ramanujan bound on average which follows from the Rankin-Selberg theory.
\begin{Lemma}[Ramanujan bound on average] \label{Ram bound}
Let $\lambda_g(n)$ be Fourier coefficients of a holomorphic or a Hecke-Maass cusp form. Then,
\begin{equation*}
    \sum_{1\leq n\leq x}|\lambda_g(n)|^2 \ll_{g,\varepsilon} x^{1+\varepsilon}.
\end{equation*}
\end{Lemma}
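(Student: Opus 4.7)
The plan is to deduce the bound from the Rankin--Selberg convolution of $g$ with its dual $\bar g$. Form the Dirichlet series
\begin{equation*}
D(s) = \sum_{n=1}^{\infty}\frac{|\lambda_g(n)|^2}{n^s},
\end{equation*}
which converges absolutely in some right half-plane $Re(s)>\sigma_0$ by any crude pointwise bound on $\lambda_g$ (e.g.\ Rankin's trivial estimate $\lambda_g(n)\ll n^{1/2}$, giving absolute convergence for $Re(s)>2$). The Rankin--Selberg identity, obtained by unfolding the integral of $|g(z)|^2 y^s$ against a real-analytic Eisenstein series for $\Gamma_0(M)$, expresses $D(s)$ up to local factors at primes dividing $M$ in terms of the automorphic $L$-function $L(s, g\otimes\bar g)$ divided by a Dirichlet $L$-factor coming from the trivial nebentypus character. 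The crucial analytic input is that $L(s,g\otimes\bar g)$ extends meromorphically to $\BC$ with a unique simple pole at $s=1$ whose residue is a positive constant multiple of the Petersson norm $\|g\|^2$. The same unfolding argument works whether $g$ is holomorphic or Maass, and for arbitrary level and nebentypus, with only minor bookkeeping at ramified primes.

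Given this analytic data, I would conclude by a standard Tauberian argument. Since the coefficients $|\lambda_g(n)|^2$ are non-negative, the cleanest route is the Wiener--Ikehara theorem, which yields the asymptotic
\begin{equation*}
\sum_{n\leq x}|\lambda_g(n)|^2 \sim c_g\, x
\end{equation*}
for a positive constant $c_g$ depending on $g$. Alternatively, Perron's formula together with a convexity bound for $L(s,g\otimes\bar g)$ in vertical strips yields the stronger power-saving statement
\begin{equation*}
\sum_{n\leq x}|\lambda_g(n)|^2 = c_g x + O_{g}(x^{1-\delta})
\end{equation*}
for some absolute $\delta>0$. Either version trivially implies the claimed bound $\ll_{g,\varepsilon} x^{1+\varepsilon}$, with enormous slack.

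The only genuine obstacle is verifying that the Rankin--Selberg machinery (unfolding, meromorphic continuation, and polar structure) behaves uniformly across holomorphic and Maass cusp forms of arbitrary $M$ and $\chi$. This is entirely classical and can be extracted from standard references such as Iwaniec's \emph{Introduction to the Spectral Theory of Automorphic Forms} or Bump's \emph{Automorphic Forms and Representations}. Because the lemma tolerates an arbitrary $x^{\varepsilon}$ loss, no delicate treatment of ramified local factors or subconvex growth is required beyond the bare existence of the simple pole at $s=1$, so the proof is essentially a citation.
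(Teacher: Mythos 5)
Your proposal is correct and follows exactly the route the paper intends: the paper gives no proof of this lemma, simply remarking that it "follows from the Rankin--Selberg theory," which is precisely the unfolding, meromorphic continuation with a simple pole at $s=1$, and standard Tauberian/Perron conclusion you describe. Nothing further is needed.
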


\section{Stationary phase analysis}


We need to use stationary phase analysis for oscillatory integrals. Let $\mathfrak{I}$ be an integral of the form
\begin{equation}\label{eintegral}
\mathfrak{I} = \int_a^b g(x)e(f(x))dx,
\end{equation}
where $f$ and $g$ are  real valued smooth functions on the interval $[a, b]$. Suppose on the interval $[a, b]$ we have $|f^\prime(x)| \geq B$, $|f^{(j)}(x)| \leq B^{1+\epsilon}$ for $j\geq 2$  and $ |g^{(j)}(x)|\ll_j 1 $. Then by substituting the change of variable 
\[
 f(x) = u,  \ \ \ f^\prime(x) \ dx = du,
\] we obtain

\[
\mathfrak{I} = \int_{f(a)}^{f(b)} \frac{g(x)}{ f^\prime(x) } e(u) du.
\] Applying integration by parts, differentiating $ g(x)/ f^\prime(x) $ $j$-times and integrating $e(u)$, we have
\begin{equation} \label{unstationary}
\mathfrak{I} \ll_{j, \epsilon} B^{-j + \epsilon}.
\end{equation} 
We will use this bound to show that in absence of a stationary phase, certain integrals are negligibly small. We next consider the case when a stationary phase exists. That happens when $f^\prime(x_0)= 0$ for some $x_0\in(a, b)$. We shall use the following lemmas stated in \cite{Aggarwal-Singh2017, Mun4} to estimate $\mathfrak{I}$.

\begin{Lemma} \label{stationary phase}
Let $f$ and $g$ be the  smooth real valued functions on the interval $[a, b]$ and  satisfying

\begin{align} \label{huxely bound}
f^{(i)} \ll \frac{\Theta_f}{ \Omega_f^i}, \ \ g^{(j)} \ll \frac{1}{\Omega_g^j} \ \ \ \and \ \ \ f^{(2)} \gg \frac{\Theta_f}{ \Omega_f^2},
\end{align} for $i=1, 2$ and $j=0, 1, 2$. Suppose that $g(a) = g(b) = 0$. 
\begin{enumerate}
\item Suppose $f^\prime$ and $f^{\prime \prime}$ do not vanish   on the interval $[a, b]$. Let $\Lambda = \min_{ x\in [a, b]} |f^\prime (x)| $. Then we have
\[
\mathfrak{I} \ll \frac{\Theta_f}{ \Omega_f^2 \Lambda^3} \left( 1 +\frac{\Omega_f}{\Omega_g} +\frac{\Omega_f^2}{\Omega_g^2} \frac{\Lambda}{\Theta_f/ \Omega_f} \right).
\]

\item Suppose that $f^\prime(x)$ changes sign from negative to positive at $x = x_0$ with $a<x_0 <b$. Let $\kappa= \min \{  b-x_0, x_0-a \}$. Further suppose that bound in equation \eqref{huxely bound} holds for $i=4$. Then we have the following  asymptotic expansion of $\mathfrak{I} $
\[
\mathfrak{I} = \frac{g(x_0) e( f(x_0) + 1/8)}{\sqrt{f^{\prime \prime } (x_0)}} + \left( \frac{\Omega_f^4}{ \Theta_f^2 \kappa^3} +  \frac{\Omega_f}{ \Theta_f^{3/2}  } +  \frac{\Omega_f^3}{ \Theta_f^{3/2} \Omega_g^2 }\right). 
\]
\item We will also need the expansion of $\mathfrak{I}$ up to the the second main term,
\begin{equation*}
\begin{split}
\mathfrak{I} = &\frac{e(f(x_0) + 1/8)}{\sqrt{f''(x_0)}}\bigg(g(x_0) + \frac{ig''(x_0)}{4\pi f''(x_0)} - \frac{i(g(x_0)f^{(4)}(x_0) + g'(x_0)f^{(3)}(x_0))}{16\pi f''(x_0)^2} + \frac{5i g(x_0)f^{(3)}(x_0)^2}{48\pi f''(x_0)^3} \bigg)
\\& + O\left(\frac{\Omega_f^5}{\Omega_g^4\Theta_f^{5/2}} +\frac{\Omega_f}{\Theta_f^{5/2}} \sum_{j=0}^3 \frac{\Omega_f^j}{\Omega_g^j} + \frac{\Omega_f^7}{\Theta_f^{7/2}\Omega_g^6} + \frac{\Omega_f}{\Theta_f^{7/2}}\sum_{j=0}^5\frac{\Omega_f^j}{\Omega_g^j}\right).
\end{split}
\end{equation*}
\end{enumerate}
\end{Lemma}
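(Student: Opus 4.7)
My plan is to derive all three estimates by a common mechanism: change the variable to $u = f(x)$ in \eqref{eintegral}, and then either integrate by parts many times (part (1), where $f'$ does not vanish) or localize near the stationary point $x_0$ and reduce to a Gaussian integral (parts (2) and (3)).

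\textbf{Part (1).} After substituting $u = f(x)$ the integral becomes $\int_{f(a)}^{f(b)} G(u)\, e(u)\, du$ with $G(u) = g(x(u))/f'(x(u))$. Since $g(a) = g(b) = 0$, integrating by parts $j$ times produces no boundary terms, giving $|\mathfrak{I}| \leq \int |G^{(j)}(u)|\, du$. Expanding $G^{(j)}$ via Fa\`a di Bruno, each summand is a product whose factors are either $f^{(i)}/(f')^i$ (bounded by $(\Theta_f/\Omega_f^i)\Lambda^{-i}$) or $g^{(i)}/(f')^i$ (bounded by $\Lambda^{-i}\Omega_g^{-i}$). The three terms in the stated bound correspond to the leading summands of type ``pure phase'' ($f''/(f')^2$ iterated), ``mixed'' (one $g'/f'$ together with phase factors), and ``pure amplitude'' ($g'/f'$ iterated), respectively; optimizing the number of integrations by parts against these contributions reproduces the displayed estimate.

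\textbf{Parts (2) and (3).} The strategy is localization. Split $[a,b]$ into the ``stationary arc'' $I_0 = [x_0 - \delta, x_0 + \delta]$ with $\delta \asymp \Omega_f/\sqrt{\Theta_f}$ (the natural scale on which the quadratic phase $\tfrac12 f''(x_0)(x-x_0)^2$ has amplitude $O(1)$) and its complement. On the complement, Taylor expansion gives $|f'(x)| \gtrsim (\Theta_f/\Omega_f^2)|x - x_0|$, so after a smooth partition of unity one applies part (1) and collects the error terms listed. On $I_0$ I would Taylor expand the phase $f(x)$ to order $2$ for part (2) and to order $4$ for part (3), and the amplitude $g(x)$ to order $0$ or $2$ respectively, substitute $y = \sqrt{f''(x_0)}(x - x_0)$, and extend the range to $\BR$ (the tails being negligible by a secondary integration by parts). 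The Gaussian moments $\int_\BR y^{2k} e(y^2/2)\, dy$ then yield the Fresnel main term $g(x_0)e(f(x_0) + 1/8)/\sqrt{f''(x_0)}$ of part (2) and the corrections involving $g''(x_0)$, $f^{(3)}(x_0)$, $f^{(4)}(x_0)$ of part (3).

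\textbf{The main obstacle} is bookkeeping in part (3): one must carefully track the errors from (i) truncation of the Taylor expansions, (ii) extension of the $y$-integral to $\BR$, and (iii) the smooth cutoff at $|x - x_0| = \delta$, and verify that they combine exactly into the stated $O(\cdots)$ — in particular the rather delicate term $\Omega_f^7/(\Theta_f^{7/2}\Omega_g^6)$, which arises only from cross-terms between high-order phase derivatives and amplitude variation. Since these computations are standard but lengthy, my plan (in line with the paper's) is ultimately to defer to \cite{Aggarwal-Singh2017, Mun4} for the full verification.
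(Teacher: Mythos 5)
The paper's own proof of this lemma is a two-line citation: parts (1) and (2) are Theorems 1 and 2 of \cite{HUX}, and part (3) is obtained from Proposition 8.2 of \cite{bky} by expanding the asymptotic there up to $n=4$ (the terms $n=0,\dots,3$ give the displayed main terms and $n=3,4$ give the error). Your plan instead sketches the underlying derivation, which is reasonable in spirit, but as written it has two concrete problems. First, in part (1) the claim that ``integrating by parts $j$ times produces no boundary terms'' because $g(a)=g(b)=0$ is false for $j\geq 2$: only the first boundary term vanishes automatically, while the second involves $G'(u)=\frac{d}{du}\bigl(g/f'\bigr)$ at the endpoints, which contains $g'(a)$, $g'(b)$, and these are not assumed to vanish. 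The stated bound is precisely Huxley's estimate coming from exactly two integrations by parts, in which those endpoint and second-order contributions account for the terms $\Omega_f/\Omega_g$ and $(\Omega_f^2/\Omega_g^2)\,\Lambda\Omega_f/\Theta_f$; it is not produced by ``optimizing the number of integrations by parts,'' and your version of the argument would have to be repaired to track the surviving boundary terms.

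Second, for part (3) the whole content of the statement is the exact shape of the error term (for instance $\Omega_f^7/(\Theta_f^{7/2}\Omega_g^6)$ and the sums over $j$), and your plan explicitly concedes this bookkeeping and proposes to ``defer to \cite{Aggarwal-Singh2017, Mun4}.'' That deferral is essentially circular: those papers are where the lemma is stated, not where it is proved, and they in turn rest on the same sources the present paper cites. If you want to defer, the correct references (and the paper's actual proof) are \cite{HUX}, Theorems 1 and 2, for parts (1)--(2), and \cite{bky}, Proposition 8.2, for part (3), where the second main term and the error are read off by truncating the expansion at $n=4$. Your localization-plus-Gaussian-moments outline for (2)--(3) is the standard route and would work if carried out with the stated care, but as a proof it currently neither completes the computation nor points to a source that does.
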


\begin{proof}
For $(1)$ and $(2)$, see Theorem 1 and Theorem 2 of \cite{HUX}. For $(3)$, use Proposition 8.2 of \cite{bky} and expand the expression up to $n=4$. The $n=0, 1, 2, 3$ terms contribute to the main term and the terms $n=3, 4$ give the error term.
\end{proof}
We shall also use the following estimates on exponential sums in two variables. Let $f(x, y)$ and $g(x, y)$  be two real valued smooth functions on the rectangle $[a, b] \times [c,d]$.  We consider the exponential integral in two variables given by
\begin{equation}
\int_a^b \int_c^d g(x, y) e(f(x, y)) dx  \ dy.
\end{equation}
Suppose there exist parameters $r_1, r_2>0$ such that

\begin{align} \label{conditionf}
\frac{\partial^2 f}{\partial^2 x}\gg r_1^2, \hspace{1cm} \frac{\partial^2 f}{\partial^2 y}\gg r_2^2,\hspace{1cm}    \frac{\partial^2 f(x, y)}{\partial^2 x} \frac{\partial^2 f}{\partial^2 y} -  \left[\frac{\partial^2 f}{\partial x \partial y} \right] \gg r_1 r_2,  
\end{align}  for all $x, y \in [a, b] \times [c,d]$. Then we have (See \cite[Lemma 4]{BR2})

\[
 \int_a^b \int_c^d  e(f(x, y)) dx dy \ll \frac{1}{r_1 r_2}. 
\] Further suppose that $ \textrm{Supp}(g) \subset (a,b) \times (c,d)$. The total variation of $g$ equals

\begin{equation} \label{total variation}
\textrm{var}(g) = \int_a^b \int_c^d  \left|  \frac{\partial^2 g(x, y)}{\partial x \partial y} \right| dx dy.
\end{equation}
 We have the following result (see \cite[Lemma 5]{BR2}).
 \begin{Lemma} \label{double expo sum}
 Let $f$ and  $g$ be as above. Let $f$ satisfies the conditions given in  equation \eqref{conditionf}. Then we have
 \[
 \int_a^b \int_c^d g(x, y) e(f(x, y)) dx dy \ll \frac{\textrm{var}(g)}{r_1 r_2},
 \] with an absolute implied constant. 
 \end{Lemma}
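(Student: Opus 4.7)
The plan is to prove this lemma in two stages: first an unweighted bound with $g\equiv 1$, and then incorporate the amplitude $g$ via integration by parts that exploits its compact support inside $R = [a,b] \times [c,d]$.

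\textbf{Stage 1: the unweighted bound.} For any subrectangle $[u,b] \times [v,d] \subseteq R$, I would aim to show
\[
\left| \int_u^b \int_v^d e(f(x,y)) \, dx \, dy \right| \ll \frac{1}{r_1 r_2}.
\]
The key input is the non-degeneracy of the Hessian, $f_{xx} f_{yy} - f_{xy}^2 \gg r_1^2 r_2^2$ (interpreting the third condition of \eqref{conditionf} to this effect). The standard route is a two-dimensional van der Corput argument via the change of variables $(x,y) \mapsto (f_x(x,y), f_y(x,y))$; since the Jacobian of this map is $\det\nabla^2 f$, which is bounded below in absolute value by the Hessian condition, the map is a local diffeomorphism with $dx\, dy = du\, dv/|\det\nabla^2 f|$. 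After this substitution the phase becomes (up to a Legendre transform) a linear form in $(u,v)$, and a Fresnel-type estimate, or an iterated 1D van der Corput along the new axes, extracts the full $1/(r_1 r_2)$ saving.

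\textbf{Stage 2: inserting the amplitude.} Since $\text{Supp}(g) \subset (a,b)\times(c,d)$, $g$ vanishes on the boundary of $R$, so
\[
g(x,y) = \int_a^x \int_c^y \frac{\partial^2 g}{\partial u\, \partial v}(u,v) \, du \, dv.
\]
Substituting this and interchanging the order of integration,
\[
\int_a^b \int_c^d g(x,y)\, e(f(x,y))\, dx \, dy = \int_a^b \int_c^d \frac{\partial^2 g}{\partial u\, \partial v}(u,v)\, J(u,v)\, du \, dv,
\]
where $J(u,v) = \int_u^b \int_v^d e(f(x,y))\, dx\, dy$. Stage 1 gives $|J(u,v)| \ll 1/(r_1 r_2)$ uniformly in $(u,v)$; pulling this out and recognizing the remaining integral as the total variation \eqref{total variation} yields the claimed bound $\ll \text{var}(g)/(r_1 r_2)$.

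The main obstacle is Stage 1. Iterating the 1D van der Corput estimate naively gives only $\ll (d-c)/r_1$, losing the saving in $y$; the Hessian non-degeneracy hypothesis is exactly what is needed to avoid this loss, by coupling the $x$ and $y$ oscillations. A secondary technical issue is that the map $(f_x, f_y)$ may fail to be injective globally on $R$, or the stationary point of $f$ may lie near or outside $R$; these are handled by partitioning $R$ into subregions on which the map is one-to-one and treating stationary versus non-stationary subregions separately, much as in the 1D analysis underlying Lemma~\ref{stationary phase}.
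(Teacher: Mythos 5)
The paper itself offers no proof of this lemma: both the unweighted estimate $\int_a^b\int_c^d e(f(x,y))\,dx\,dy \ll 1/(r_1r_2)$ and the weighted statement are simply quoted from \cite{BR2} (Lemmas 4 and 5 there). Your Stage 2 is exactly the standard derivation of Lemma \ref{double expo sum} from the unweighted bound: since $\mathrm{Supp}(g)\subset(a,b)\times(c,d)$, you write $g(x,y)=\int_a^x\int_c^y \partial_u\partial_v g(u,v)\,dv\,du$, interchange integrals, and reduce to the inner integrals $J(u,v)$ over subrectangles, on which the hypotheses \eqref{conditionf} still hold; pulling out $\sup_{u,v}|J(u,v)|\ll 1/(r_1r_2)$ leaves precisely the total variation \eqref{total variation}. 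That part is correct and complete, and it is the natural way to pass from the unweighted to the weighted statement.

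The weak point is Stage 1, which as written is not a proof. After the substitution $(x,y)\mapsto(f_x,f_y)$ the phase becomes the Legendre transform of $f$, which is not linear, and the amplitude acquires the factor $1/|\det\nabla^2 f|$, whose variation cannot be controlled by \eqref{conditionf}: a van der Corput step in the new variables would require derivatives of this Jacobian, i.e. third derivatives of $f$, which are not hypothesized. The standard proof of the unweighted bound runs differently: apply the one-dimensional second-derivative test (stationary phase, as in Lemma \ref{stationary phase}) in $x$ for fixed $y$, saving $1/r_1$ and producing the phase $y\mapsto f(x_0(y),y)$; the identity $\frac{d^2}{dy^2}f(x_0(y),y)=\bigl(f_{xx}f_{yy}-f_{xy}^2\bigr)/f_{xx}$ evaluated at $x=x_0(y)$, together with the determinant condition (read as $\gg (r_1r_2)^2$, which is what \eqref{conditionf} intends), gives a second derivative $\gg r_2^2$ in $y$, and a second application of the 1D test saves the remaining $1/r_2$; non-stationary ranges are handled by the first-derivative bound \eqref{unstationary}. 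Since the paper already quotes the unweighted estimate from \cite{BR2} immediately before the lemma, the cleanest course is to invoke it and keep your Stage 2 verbatim; if you want Stage 1 to be self-contained, replace the change-of-variables heuristic by the iterated stationary-phase argument just described.
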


\subsection{A Fourier-Mellin transform} Let $U$ be a smooth real valued function supported on the interval $[a, b] \subset (0, \infty)$ and satisfying $U^{(j)}\ll_{a, b, j} 1$. Let $r\in \mathbb{R}$ and $s= \sigma + i \beta \in \mathbb{C}$. We consider the following integral transform
\begin{equation} \label{FM}
U^\dagger(r, s) := \int_0^{\infty} U(x) e(-rx) x^{s-1} dx. 
\end{equation} We are interested in the behaviour of this integral in terms of parameters $\beta$ and $r$. The integral  $U^\dagger (r, s) $ is of the form given in equation \eqref{eintegral} with functions
\[
g(x) = U(x) x^{\sigma-1} \ \ \ \  \textrm{and} \ \ \ \ f(x) =\frac{1}{2 \pi} \beta \log x - rx. 
\] Derivatives of $f(x)$ are given by  
\[
f^\prime (x) = -r + \frac{\beta}{2 \pi x},  \qquad f^{(j)} (x)= (-1)^{j-1} (j-1)! \frac{\beta}{2 \pi x^j} \quad (\text{for } j\geq2). 
\] The stationary point is given by 
\[
f^\prime (x_0)= 0 \ \ \  \   \textrm{i.e.}  \ \  \ \ \ x_0 = \frac{\beta}{2 \pi r}. 
\]
We can write $f^\prime (x)$ in terms of $\beta$ and $r$ as 
\[
f^\prime (x) = \frac{\beta}{2 \pi} \left( \frac{1}{x} - \frac{1}{x_0}\right) = r \left( \frac{x_0}{x} - 1\right). 
\] Let us first assume that $x_0 \notin[a/2, 2b]$. In that case, $|f^\prime (x)| \gg_{a, b, \sigma} \max \{|r|, |\beta| \}$  and $f^{(j)} (x)\ll_{a, b, \sigma, j} |\beta|$ for $x \in [a, b]$. Then the bound in \eqref{unstationary} implies $U^\dagger (r, s) \ll_j \min \{  |r|^{-j}, |\beta|^{-j}\}$. Next, we consider the case when $x_0 \in [a/2 , 2b]$. Then $|r| \asymp_{a, b} |\beta|$. We use part $(3)$ of Lemma \ref{stationary phase} with $\Theta_f = |\beta|$ and $\Omega_f=\Omega_g=1$ to conclude
\begin{equation*}
\begin{split}
U^\dagger (r, s)= &\frac{e(f(x_0) + 1/8)}{\sqrt{f''(x_0)}}\bigg(g(x_0) + \frac{ig''(x_0)}{4\pi f''(x_0)} - \frac{i(g(x_0)f^{(4)}(x_0) + g'(x_0)f^{(3)}(x_0))}{16\pi f''(x_0)^2} + \frac{5i g(x_0)f^{(3)}(x_0)^2}{48\pi f''(x_0)^3} \bigg)
\\& \qquad + O_{a,b,\sigma}\left(\min\{|\beta|^{-5/2}, |r|^{-5/2}\}\right).
\end{split}
\end{equation*}
 
We record the  above results in the following lemma.

\begin{Lemma} \label{Fourier Mellin} 
Let $U$ be a smooth real valued function with $\textrm{supp} (U) \subset[a, b] \subset (0, \infty)$ that satisfies $U^{(j)}(x)\ll_{a, b, j} 1$. Let  $r\in \mathbb{R}$ and $s= \sigma + i \beta \in \mathbb{C}$. We have

\begin{equation}\label{dagger lemma}
\begin{split}
U^{\dagger}(r,s)=\frac{\sqrt{2\pi}e(1/8)}{\sqrt{-\beta}}\left(\frac{\beta}{2\pi er}\right)^{i\beta} U^\sharp\bigg(\sigma, \frac{\beta}{2\pi r}\bigg) + O_{a,b,\sigma}\left(\min\{|\beta|^{-5/2},|r|^{-5/2}\}\right),
\end{split}
\end{equation}
where
\begin{equation*}
\begin{split}
&U^\sharp\bigg(\sigma, \frac{\beta}{2\pi r}\bigg) = U_0\bigg(\sigma,\frac{\beta}{2\pi r}\bigg) - \frac{i}{\beta}U_1\bigg(\sigma,\frac{\beta}{2\pi r}\bigg),\\
&U_0(\sigma, x) = x^\sigma U(x) \quad \text{ and } 
\\& U_1(\sigma, x) = \frac{1}{12}U_0(\sigma, x) + \frac{x^2}{4}\frac{d}{dx}\bigg(\frac{U_0(\sigma,x)}{x}\bigg) + \frac{x^3}{2}\frac{d^2}{dx^2}\bigg(\frac{U_0(\sigma,x)}{x}\bigg).
\end{split}
\end{equation*}
Moreover, we have the bound

\begin{equation}\label{dagger repeated ibp}
U^\dagger (r, s) = O_{a, b, \sigma, j} \left(\min \left\lbrace \left(\frac{1+|\beta|}{|r|} \right)^j , \left(\frac{1+|r|}{|\beta|} \right)^j \right\rbrace \right). 
\end{equation}
\end{Lemma}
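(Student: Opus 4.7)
The plan is to apply the stationary phase machinery of Lemma \ref{stationary phase} directly. Recasting $U^\dagger(r,s)$ as $\int_a^b g(x)\,e(f(x))\,dx$ with amplitude $g(x)=U(x)x^{\sigma-1}$ and phase $f(x)=(\beta/2\pi)\log x - rx$, the unique candidate stationary point is $x_0=\beta/(2\pi r)$, with higher derivatives $f^{(j)}(x)=(-1)^{j-1}(j-1)!\,\beta/(2\pi x^j)$ for $j\ge 2$.

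First I would dispose of the non-stationary range $x_0\notin[a/2,2b]$. Here on the support of $U$ one checks $|f'(x)|\gg_{a,b}\max\{|r|,|\beta|\}$ while $|f^{(j)}(x)|\ll_{a,b,j}|\beta|$ and $|g^{(j)}(x)|\ll_j 1$, so repeated integration by parts as in \eqref{unstationary} yields $U^\dagger(r,s)\ll_j\min\{|r|^{-j},|\beta|^{-j}\}$. Since $\beta/(2\pi r)$ lies outside $[a/2,2b]\supset \mathrm{supp}(U)$, one has $U^\sharp(\sigma,\beta/(2\pi r))=0$, so the asymptotic \eqref{dagger lemma} collapses to its error term, which the above bound absorbs. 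The same integration-by-parts argument, applied by integrating either the $e(-rx)$ factor or the $x^{i\beta}$ factor $j$ times and bounding derivatives of the remaining piece, also produces the general decay \eqref{dagger repeated ibp}, valid in both ranges.

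In the stationary range $x_0\in[a/2,2b]$ one automatically has $|r|\asymp_{a,b}|\beta|$, and I would invoke Lemma \ref{stationary phase}(3) with $\Theta_f=|\beta|$, $\Omega_f=\Omega_g=1$, and $\kappa\asymp 1$. The phase evaluates to $f(x_0)=(\beta/2\pi)(\log(\beta/(2\pi r))-1)$, producing $e(f(x_0))=(\beta/(2\pi e r))^{i\beta}$, and $f''(x_0)=-2\pi r^2/\beta$ gives the Fresnel factor $1/\sqrt{f''(x_0)}$ equal to $\sqrt{2\pi}/\sqrt{-\beta}$ (up to the $e(1/8)$ phase recorded in Lemma \ref{stationary phase}(3)). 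The leading amplitude $g(x_0)=U(x_0)x_0^{\sigma-1}$ combines with the factor $x_0$ hidden inside $1/\sqrt{f''(x_0)}$ to give exactly $U_0(\sigma,x_0)$. The three secondary corrections involving $g''/f''$, $(gf^{(4)}+g'f^{(3)})/(f'')^2$, and $g(f^{(3)})^2/(f'')^3$ are then consolidated by repeatedly using $f^{(j)}(x_0)\propto \beta/x_0^j$; the $\beta$-dependencies cancel except for an overall $-i/\beta$, yielding the secondary term $-(i/\beta)U_1(\sigma,x_0)$ in the stated form. The error from Lemma \ref{stationary phase}(3) reduces to $O(|\beta|^{-5/2})$, which equals $O(\min\{|\beta|^{-5/2},|r|^{-5/2}\})$ under $|r|\asymp|\beta|$.

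The main obstacle is the algebraic bookkeeping in the stationary case: identifying the four secondary corrections of Lemma \ref{stationary phase}(3) with the concrete combination $U_1=\frac{1}{12}U_0+\frac{x^2}{4}\frac{d}{dx}(U_0/x)+\frac{x^3}{2}\frac{d^2}{dx^2}(U_0/x)$ is routine but demands careful tracking of the factors of $x_0$, $r$, and $\beta$. Once $x_0=\beta/(2\pi r)$ is systematically used to eliminate $r$ in favor of $x_0$ and $\beta$, the cancellations are forced and $U_1$ emerges in closed form; all other steps are direct applications of the lemmas already available in the excerpt.
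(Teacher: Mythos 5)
Your proposal is correct and follows essentially the same route as the paper: the same phase/amplitude splitting $g(x)=U(x)x^{\sigma-1}$, $f(x)=\frac{\beta}{2\pi}\log x-rx$, the same dichotomy between $x_0=\beta/(2\pi r)$ outside $[a/2,2b]$ (repeated integration by parts as in \eqref{unstationary}) and inside (Lemma \ref{stationary phase}(3) with $\Theta_f=|\beta|$, $\Omega_f=\Omega_g=1$, so that $|r|\asymp|\beta|$ and the error is $O(\min\{|\beta|^{-5/2},|r|^{-5/2}\})$). Your bookkeeping of $e(f(x_0))=(\beta/(2\pi e r))^{i\beta}$, $f''(x_0)=-2\pi r^2/\beta$, and the consolidation of the secondary terms into $-(i/\beta)U_1$ matches the paper's computation, and is if anything slightly more explicit.
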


\section{The Set up}
In this paper, we present calculations for Hecke-Maass cusp forms of arbitrary level and nebentypus. These calculations can be adopted for holomorphic cusp forms by replacing the $C_{\ell,\delta}(s)$ with $\gamma_k(s)$ appropriately and needs no additional techniques.

Let $g$ be a Hecke-Maass cusp form for $\Gamma_0(M)$ with nebentypus $\chi$. Let $P$ be a parameter to be chosen later and $\CP$ be the set of primes in $[P,  2P]$ that are coprime to $M$. Let $P^\star$ be the size of $\CP$, so that $P^\star\asymp P/\log P$. We detect $r=n$ by using the following averaged version of lemma \ref{trivial delta method},
\begin{equation}\label{averaged delta}
\delta(r = n) = \frac{1}{P^\star}\sum_{p\in\CP} \frac{1}{p}\sum_{\alpha\bmod p}e\left(\frac{(r-n)\alpha}{p}\right) \int e\bigg(\frac{K(r-n)x}{N}\bigg)V(x)dx + O_A(N^{-A}),
\end{equation}
with $P> N^{1+\epsilon}/K$ for a fixed $\epsilon>0$. We start with \eqref{start} and apply \eqref{averaged delta} to write,

\begin{equation}\label{S(N)}
S(N) = S^\star(N) + S^\flat(N) + O(N^{-2018})
\end{equation}
where

\begin{equation}\label{Sstar(N)}
\begin{split}
S^\star(N) = &\frac{1}{P^\star}\sum_{p\in\CP}\frac{1}{p}\sumx_{\alpha\bmod p}\ \sum_{r\geq1}r^{-it}e\bigg(\frac{r\alpha}{p}\bigg)U\bigg(\frac{r}{N}\bigg)\sum_{n\geq1}\lambda_g(n)e\bigg(\frac{-n\alpha}{p}\bigg)V\bigg(\frac{n}{N}\bigg) \\
&\times \int_\BR e\bigg(\frac{K(r-n)x}{N}\bigg) V(x) dx
\end{split}
\end{equation}
and
\begin{equation}\label{S0(N)}
\begin{split}
S^\flat(N) = \frac{1}{P^\star}\sum_{p\in\CP}\frac{1}{p} \sum_{r\geq1}r^{-it}U\bigg(\frac{r}{N}\bigg)\sum_{n\geq1}\lambda_g(n)V\bigg(\frac{n}{N}\bigg) \int_\BR e\bigg(\frac{K(r-n)x}{N}\bigg) V(x) dx.
\end{split}
\end{equation}

We next apply dual summation formulas to the $n$-sum and the $r$-sum in both $S^\star(N)$ and $S^\flat(N)$. For ease of notation, we write
\begin{equation}\label{Sc(N)}
\begin{split}
S_c(N) = &\frac{1}{P^\star}\sum_{p\in\CP}\frac{1}{p}\sumx_{\alpha\bmod c}\ \sum_{r\geq1}r^{-it}e\bigg(\frac{r\alpha}{c}\bigg)U\bigg(\frac{r}{N}\bigg)\sum_{n\geq1}\lambda_g(n)e\bigg(\frac{-n\alpha}{c}\bigg)V\bigg(\frac{n}{N}\bigg) \\
&\times \int_\BR e\bigg(\frac{K(r-n)x}{N}\bigg) V(x) dx
\end{split}
\end{equation}
Then $S^\flat(N)$ corresponds to $c=1$ and $S^\star(N)$ corresponds to $c=p$.

\subsection{\texorpdfstring{Application of Voronoi formula to the $n$-sum}{Application of Voronoi formula to the n-sum}} The $n$-sum in \eqref{Sc(N)} is
\begin{equation*}
\sum_{n\geq1}\lambda_g(n)e\bigg(\frac{-n\alpha}{c}\bigg)V\bigg(\frac{n}{N}\bigg) e\bigg(\frac{-Knx}{N}\bigg).
\end{equation*}
Application of Voronoi summation formula as given in lemma \ref{voronoi} transforms the above sum into
\begin{equation}\label{after voronoi}
\begin{split}
&\frac{ N\eta(M)\chi(-c)}{2ic\sqrt{M}}\sum_{n\geq1}\lambda_g(n) e\bigg(\frac{n\overline{M\alpha }}{c}\bigg)\int_0^\infty e(-Kyx)\int_{(\sigma)}\bigg(\frac{\sqrt{nNy}}{c\sqrt{M}}\bigg)^{-s}(C_{\ell,0}(s)-C_{\ell,1}(s))V(y) ds\ dy\\ 
+& \frac{\varepsilon_g N\eta(M)\chi(c)}{2ic\sqrt{M}}\sum_{n\geq1}\lambda_g(n) e\bigg(\frac{n\overline{M\alpha }}{c}\bigg)\int_0^\infty e(-Kyx)\int_{(\sigma)}\bigg(\frac{\sqrt{nNy}}{c\sqrt{M}}\bigg)^{-s}(C_{\ell,0}(s)+C_{\ell,1}(s))V(y) ds\ dy.
\end{split}
\end{equation}
for some small $\sigma>1/2$. Shifting the line of integration to the contour $-1/2+i\infty, -1/2+i\varepsilon_0, 1/2+i\varepsilon_0, 1/2-i\varepsilon_0, -1/2-i\varepsilon_0, -1/2-i\infty$ (for some $\varepsilon_0>0$), the integral converges absolutely and we can interchange the $s$ and $y$ integrals. Then each integral in \eqref{after voronoi} can be rewritten as a combination of,
\begin{equation*} \pm\int_{(\sigma)}\bigg(\frac{\sqrt{nN}}{c\sqrt{M}}\bigg)^{-s}C_{\ell,\delta}(s) V^\dagger(Kx, 1-s/2)ds.
\end{equation*}
Using the bound \eqref{bound on C} on $C_{\ell,\delta}(s)$, the bound \eqref{dagger repeated ibp} on $V^\dagger(Kx, 1-s/2)$, and shifting the line of integration to large $\sigma\gg 1$, the above integral gives arbitrary saving for $n\gg  Mc^2K^2t^\varepsilon/N $. We therefore restrict $n\ll Mc^2K^2t^\varepsilon/N$ at the cost of an arbitrarily small error. For smaller values of $n$, we shift the line of integration to $\sigma=1$ (so that $C_{\ell,\delta}(s)$ is bounded). Then \eqref{Sc(N)} becomes,
\begin{equation*}
\begin{split}
S_c(N) = &\frac{ N\eta(M)}{2iP^\star\sqrt{M}}\sum_\pm\varepsilon_{g,\pm}\sum_{p\in\CP}\frac{\chi(-c)}{pc} \sumx_{\alpha\bmod c}\ \underset{n\ll \frac{Mc^2K^2t^\varepsilon}{N}}{\sum} \lambda_g(n)e\bigg(\frac{\mp n\overline{M\alpha}}{c}\bigg)\sum_{r\geq1}r^{-it}e\bigg(\frac{r\alpha}{c}\bigg)U\bigg(\frac{r}{N}\bigg)\\
&\times\int_\BR e\bigg(\frac{Krx}{N}\bigg)V(x)\int_{(\sigma=1)}\bigg(\frac{4\pi\sqrt{nN}}{c\sqrt{M}}\bigg)^{-s}(C_{\ell,0}(s)\pm C_{\ell,1}(s)) V^\dagger(Kx, 1-s/2)ds\ dx + O(N^{-2018}),
\end{split}
\end{equation*}
where $\varepsilon_{g,-}=1$ and $\varepsilon_{g,+}=\chi(-1)\varepsilon_g$.

\subsection{\texorpdfstring{Application of Poisson summation to the $r$-sum}{Application of Poisson summation to the r-sum}} The $r$-sum in above is,
\begin{equation*}
\sum_{r\geq1}r^{-it}e\bigg(\frac{r\alpha}{c}\bigg)U\bigg(\frac{r}{N}\bigg) e\bigg(\frac{Krx}{N}\bigg).
\end{equation*}
Breaking the $r$-sum modulo $c$ by changing variables $r\mapsto \beta+rc$, the above equals
\begin{equation*}
\sum_{r\in\BZ}\sum_{\beta\bmod c}(\beta+rc)^{-it}e\bigg(\frac{\beta\alpha}{c}\bigg)U\bigg(\frac{\beta+rc}{N}\bigg) e\bigg(\frac{K(\beta+rc)x}{N}\bigg).
\end{equation*}
Applying Poisson summation to the $r$-sum and changing variables, the above sum transforms into,
\begin{equation*}
N^{1-it}\sum_{r\in\BZ}\delta(\alpha\equiv -r\bmod c)\int_\BR u^{-it}e\bigg(\frac{-Nur}{c}\bigg) e(Kux) U(u) du.
\end{equation*}
Integration by parts to the $u$-integral gives arbitrary saving unless $|Kx-Nr/c|\ll t^{1+\varepsilon}$. Since we will choose $K<t^{1-\varepsilon}$, the $u$-integral gives arbitrary saving unless $|r|\ll ct^{1+\varepsilon}/N$. Putting everything together,
\begin{equation}\label{after dual summation}
\begin{split}
S_c(N) = \frac{\pi N^{2-it}\eta(M)}{P^\star\sqrt{M}}\sum_{\pm}\varepsilon_{g,\pm}\sum_{p\in\CP}\frac{\chi(-c)}{pc}& \underset{n\ll \frac{Mc^2K^2t^\varepsilon}{N}}{\sum} \lambda_g(n) \sum_{\substack{|r|\ll \frac{ct^{1+\varepsilon}}{N}\\ (r,c)=1}}e\bigg(\frac{\pm n\overline{rM}}{c}\bigg)\\&\times (\CI_0(n,r,c)\pm\CI_1(n,r,c)) + O(N^{-2018})
\end{split}
\end{equation}
where
\begin{equation}\label{I(n,r,p)}
\CI_\delta(n,r,c)= \frac{1}{2\pi i}\int_{(\sigma)}\bigg(\frac{\sqrt{nN}}{c\sqrt{M}}\bigg)^{-s}C_{\ell,\delta}(s) \int_\BR U^\dagger(Nr/c-Kx, 1-it) V^\dagger(Kx, 1-s/2)  V(x) dx ds.
\end{equation}

\section{\texorpdfstring{Analysis of $\CI_\delta(n,r,c)$}{Analysis of I(n,r,c)}}

In this section, we analyze the integral $\CI_\delta(n,r,c)$. For the rest of the paper we fix the real part of $s$ to be $1$. Then, $C_{\ell,\delta}(s)\ll 1$. Bounds on $V^\dagger(Kx, 1-s/2)$ give us arbitrary saving for $|s|\gg Kt^\varepsilon$. With $s=1+i\tau$,
\begin{equation*}
\CI_\delta(n,r,c)= \frac{1}{2\pi }\int_{|\tau|<Kt^\varepsilon}\bigg(\frac{\sqrt{nN}}{c\sqrt{M}}\bigg)^{-s}C_{\ell,\delta}(s) \CI^\star(r,c,s) d\tau + O_A(t^{-A}).
\end{equation*}
where
\begin{equation}
\CI^\star(r,c,s) = \int_\BR U^\dagger\bigg(\frac{Nr}{c}-Kx, 1-it\bigg) V^\dagger(Kx, 1-s/2)  V(x) dx
\end{equation}

We start by analyzing the integral $\CI^\star(r,c,s)$ using stationary phase analysis.

\subsection{\texorpdfstring{Stationary phase analysis of $U^\dagger$ and $V^\dagger$}{Stationary phase analysis of U dagger and V dagger}}

By Lemma \ref{stationary phase},
\begin{equation*}
\begin{split}
U^\dagger\bigg(\frac{Nr}{c}-Kx, 1-it\bigg) = & \frac{\sqrt{2\pi}e(1/8)}{\sqrt{t}}\bigg(\frac{-t}{2\pi e(Nr/c-Kx)}\bigg)^{-it}U^\sharp\bigg(1, \frac{-t}{2\pi (Nr/c-Kx)}\bigg) + O(t^{-5/2})
\end{split}
\end{equation*}
where,
\begin{equation*}
U^\sharp\bigg(1, \frac{-t}{2\pi (Nr/c-Kx)}\bigg) = U_0\bigg(1, \frac{-t}{2\pi (Nr/c-Kx)}\bigg) + \frac{i}{t}U_1\bigg(1, \frac{-t}{2\pi (Nr/c-Kx)}\bigg).
\end{equation*}
Similarly,
\begin{equation*}
V^\dagger(Kx, 1-s/2) = \frac{2\sqrt{\pi}e(1/8)}{\sqrt{\tau}} \bigg(\frac{-\tau}{4\pi e Kx}\bigg)^{-i\tau/2}V^\sharp\bigg(\frac{1}{2}, \frac{-\tau}{4\pi Kx}\bigg) + O(K^{-5/2})
\end{equation*}
where,
\begin{equation*}
V^\sharp\bigg(\frac{1}{2}, \frac{-\tau}{4\pi Kx}\bigg) = V_0\bigg(\frac{1}{2}, \frac{-\tau}{4\pi Kx}\bigg) + \frac{2i}{\tau}V_1\bigg(\frac{1}{2}, \frac{-\tau}{4\pi Kx}\bigg).
\end{equation*}


Therefore,
\begin{equation}\label{after first sp}
\begin{split}
\CI^\star(r,c,s) = c_1\frac{e^{i(t+\tau/2)}\tau^{-i\tau/2}}{t^{1/2+it}\tau^{1/2}K^{-i\tau/2}} \int U^\sharp\bigg(1, \frac{-t}{2\pi(Nr/c-Kx)}\bigg) V^\sharp\bigg(\frac{1}{2}, \frac{-\tau}{4\pi Kx}\bigg) &\frac{(Nr/c-Kx)^{it}}{x^{-i\tau/2}} V(x) dx\\ & + O(t^{-1/2}K^{-5/2}),
\end{split}
\end{equation}
for some constant $c_1$.

\subsection{\texorpdfstring{Analysis of the $x$-integral}{Analysis of the x-integral}} To analyze the integral in \eqref{after first sp}, we temporarily set
\begin{equation}
\begin{split}
&f(x) = \frac{t\log(Nr/c-Kx)}{2\pi} + \frac{\tau\log x}{4\pi},\\
 & g(x) = U^\sharp\bigg(1, \frac{-t}{2\pi(Nr/c-Kx)}\bigg) V^\sharp\bigg(\frac{1}{2}, \frac{-\tau}{4\pi Kx}\bigg)V(x).
\end{split}
\end{equation}
Then,
\begin{equation*}
f'(x) = \frac{-tK}{2\pi(Nr/c-Kx)} + \frac{\tau}{4\pi x} \quad \text{ and } \quad f^{(j)}(x) = \frac{-tK^j(j-1)!}{2\pi (Nr/c-Kx)^j} + \frac{(-1)^{j-1}\tau(j-1)!}{4\pi x^j}.
\end{equation*}
The stationary phase is given by
\begin{equation*}
x_0 = \frac{Nr\tau}{(\tau+2t)Kc}.
\end{equation*}
Moreover, in the support of the integral
\begin{equation*}
\begin{split}
f^{(j)}(x) \ll_j |\tau| \text{ (for $j\geq2$) } \quad \text{ and } \quad g^{(j)}(x) \ll 1 \text{ (for $j\geq0$) }.
\end{split}
\end{equation*}
Since $|\tau|<Kt^\varepsilon < t^{1-\varepsilon}$, we have $x_0\asymp -Nr\tau/tKc$. There is no stationary phase if $x_0<0.5$. Since $|r|\ll ct/N$, there is no stationary phase if $|\tau|<K^{1-\varepsilon}$. Noting the expression of $f^{(2)}(x)$ and the choice $K<t^{1-\delta}$ for some $\delta>0$, we see that in case $x_0$ lies in the support of integral, $f^{(2)}(x)\gg |\tau|$. Also, if $f'$ and $f''$ do not vanish in the support of the integral, then $f'(x)\gg K$. We have
\begin{equation}
\Theta_f = |\tau|,\quad \Omega_f= 1,\quad \Lambda= K,\quad \Omega_g= 1.
\end{equation}
Applying lemma \ref{stationary phase} in the case there is no stationary phase, the error term of the integral is bounded by $O(1/K^2)$, so it contributes $O(t^{-1/2}K^{-5/2})$ towards $\CI^\star(r,c,s)$. In case there is a stationary phase, we must have $|\tau|\gg K^{1-\varepsilon}$, so that the corresponding error term is bounded by $O(|\tau|^{-3/2})$, which contributes $O(t^{-1/2}K^{-2})$ towards $\CI^\star(r,c,s)$. Finally, if there is a stationary phase (so that $|\tau|\in(K^{1-\varepsilon},Kt^\varepsilon)$), we write
\begin{equation}\label{after sp}
\CI^\star(r,c,s) = c_1\frac{e^{i(t+\tau/2)}\tau^{-i\tau/2}}{t^{1/2+it}\tau^{1/2}K^{-i\tau/2}}\frac{g(x_0)e(f(x_0)+1/8)}{\sqrt{f''(x_0)}} + O(t^{-1/2}K^{-2}).
\end{equation}
Therefore $\CI^\star(r,c,s)\ll t^\varepsilon/t^{1/2}K$. \begin{Remark}
From this bound, the trivial bound on $\CI(n,r,c)$ is $\CI(n,r,c)\ll c\sqrt{M}/t^{1/2}\sqrt{nN}$. Plugging this bound into \eqref{after dual summation}, applying Cauchy Schwarz inequality with $n$-sum outside and using Ramanujan bound on average as done in \eqref{first cauchy}, the bound for c=1 gives,
\begin{equation}\label{Sflat}
S^\flat(N)\ll M^{1/2}Kt^{1/2+\varepsilon}/P.
\end{equation}
Therefore,
\begin{equation*}
S(N) = S^\star(N) + O(M^{1/2}Kt^{1/2+\varepsilon}/P + N^{-2018}).    
\end{equation*}
Since we will be allowed to choose $P$ as large as we want, the contribution of $S^\flat(N)$ is arbitrarily small.
\end{Remark}

We next analyze $S^\star(N)$ (so that $c=p$). By direct computation,
\begin{equation}
\begin{split}
2\pi f(x_0) &= (t+\tau/2)\log\bigg(\frac{Nr}{p(\tau+2t)}\bigg) + t\log(2t) + \frac{\tau}{2}\log(\tau/K),\\
2\pi f''(x_0) &= \frac{-\tau}{2x_0^2}\bigg(\frac{\tau}{2t}+1\bigg) =  -\bigg(\frac{(\tau+2t)^{3/2}Kp}{2Nr(\tau t)^{1/2}}\bigg)^2,\\
g(x_0) &= U^\sharp\bigg(1, \frac{-p(\tau+2t)}{4\pi Nr}\bigg)V^\sharp\bigg(\frac{1}{2}, \frac{-p(\tau+2t)}{4\pi Nr}\bigg)V\bigg(\frac{Nr\tau}{(\tau+2t)Kp}\bigg).
\end{split}
\end{equation}

Since $U(x)V(x)=V(x)$,
\begin{equation*}
\begin{split}
U^\sharp\bigg(1, \frac{-p(\tau+2t)}{4\pi Nr}\bigg)V^\sharp\bigg(\frac{1}{2}, \frac{-p(\tau+2t)}{4\pi Nr}\bigg) &=V^\sharp\bigg(1, \frac{-p(\tau+2t)}{4\pi Nr}\bigg)V^\sharp\bigg(\frac{1}{2}, \frac{-p(\tau+2t)}{4\pi Nr}\bigg)\\ &=V^\sharp\bigg(\frac{3}{2}, \frac{-p(\tau+2t)}{4\pi Nr}\bigg).
\end{split}
\end{equation*}
Plugging these into \eqref{after sp},
\begin{equation}
\CI^\star(r,p,s) = \frac{c_2e^{i(t+\tau/2)}}{(\tau+2t)^{1/2}K} \bigg(\frac{Nr}{p(\tau+2t)}\bigg)^{1+i(t+\tau/2)}V^\sharp\bigg(\frac{3}{2}, \frac{-p(\tau+2t)}{4\pi Nr}\bigg)V\bigg(\frac{Nr\tau}{p(\tau+2t)K}\bigg) + O\bigg(\frac{1}{t^{\frac{1}{2}}K^2}\bigg)
\end{equation}
for some constant $c_2$. We summarize these calculations in the following lemma.


\begin{Lemma}\label{sp lemma}
Suppose $K<t^{1-\epsilon_1}$ and $P>N^{1+\epsilon}/K$ for some $\epsilon,\epsilon_1>0$. Then,
\begin{equation*}
\CI^\star(r,p,s) = \CI_1^\star(r,p,s) + \CI_2^\star(r,p,s)
\end{equation*}
with
\begin{equation*}
\CI_1^\star(r,p,s) = \frac{c_2e^{i(t+\tau/2)}}{(\tau+2t)^{1/2}K} \bigg(\frac{Nr}{p(\tau+2t)}\bigg)^{1+i(t+\tau/2)}V^\sharp\bigg(\frac{3}{2}, \frac{-p(\tau+2t)}{4\pi Nr}\bigg)V\bigg(\frac{Nr\tau}{p(\tau+2t)K}\bigg)
\end{equation*}
and $\CI_2^\star(r,p,s) = O(t^{-1/2+\varepsilon}K^{-2})$. Moreover, $\CI_1^\star(r,p,s)$ occurs only when $\tau\in(K^{1-\varepsilon}, Kt^\varepsilon)$.
\end{Lemma}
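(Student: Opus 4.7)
The plan is to execute the two-level stationary phase analysis outlined in the running text before the statement, with careful bookkeeping to confirm that every remainder is majorized by $O(t^{-1/2+\varepsilon}K^{-2})$. First I would apply Lemma \ref{Fourier Mellin} twice inside $\CI^\star(r,p,s)$. For $U^\dagger(Nr/p-Kx,1-it)$ one takes $\beta=-t$, producing a $t^{-1/2}$ main term with phase $(-t/(2\pi e(Nr/p-Kx)))^{-it}$ multiplied by $U^\sharp(1,\cdot)$, plus the uniform remainder $O(t^{-5/2})$. For $V^\dagger(Kx,1-s/2)$ with $s=1+i\tau$ one has $\beta=-\tau/2$, yielding an analogous $\tau^{-1/2}$ main term (which automatically vanishes if its internal stationary argument lies outside the support of $V$) plus remainder $O(K^{-5/2})$, legitimate since we have already truncated to $|\tau|<Kt^\varepsilon$. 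Expanding the product of the two asymptotics and integrating against $V(x)$, the three cross terms that contain at least one remainder each contribute $O(t^{-1/2}K^{-5/2})$, which is absorbed into $\CI_2^\star$. What remains is exactly the oscillatory $x$-integral displayed in \eqref{after first sp}.

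Next I would treat that remaining $x$-integral by stationary phase using Lemma \ref{stationary phase} with $f$ and $g$ as defined in the excerpt. A direct differentiation gives $x_0 = Nr\tau/((\tau+2t)Kp)$, $f^{(j)}(x)\ll |\tau|$ for $j\geq 2$, $f''(x)\gg |\tau|$ whenever $x_0$ lies in the support, and $g^{(j)}(x)\ll 1$. Since $|\tau|<Kt^\varepsilon<t^{1-\epsilon_1+\varepsilon}$, one has $|x_0|\asymp |Nr\tau|/(tKp)$; combined with the Poisson bound $|r|\ll pt^{1+\varepsilon}/N$ inherited from \eqref{after dual summation}, this forces $|x_0|\ll |\tau|t^\varepsilon/K$, so $x_0$ can enter the support of $V$ only when $|\tau|\gg K^{1-\varepsilon}$. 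This is exactly the $\tau$-localization claimed in the lemma. In the complementary non-stationary range $|\tau|<K^{1-\varepsilon}$ one has $|f'(x)|\gg K$ throughout, and part (1) of Lemma \ref{stationary phase} with $\Theta_f=|\tau|$, $\Omega_f=\Omega_g=1$, $\Lambda=K$ bounds the $x$-integral by $O(K^{-2})$; multiplied by the $t^{-1/2}\tau^{-1/2}$ prefactor from the first step this is absorbed into $\CI_2^\star$.

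In the stationary range $|\tau|\in(K^{1-\varepsilon},Kt^\varepsilon)$, part (2) of Lemma \ref{stationary phase} applies. The main term $g(x_0)e(f(x_0)+1/8)/\sqrt{f''(x_0)}$, after direct evaluation of $f(x_0)$ and $f''(x_0)$ and the collapse $U^\sharp(1,y)V^\sharp(1/2,y)=V^\sharp(3/2,y)$ that uses the identity $U(y)V(y)=V(y)$ at the common argument $y=-p(\tau+2t)/(4\pi Nr)$, produces exactly $\CI_1^\star(r,p,s)$. The error from part (2) is $O(|\tau|^{-3/2})$; combined with the $t^{-1/2}\tau^{-1/2}$ prefactor this is $O(t^{-1/2}|\tau|^{-2})\ll t^{-1/2+\varepsilon}K^{-2}$ by the lower bound $|\tau|\gg K^{1-\varepsilon}$. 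The main obstacle is entirely bookkeeping: one must verify that each separate remainder (the $K^{-5/2}$ Fourier--Mellin cross terms, the $K^{-2}$ non-stationary $x$-integral bound, and the $|\tau|^{-3/2}$ stationary-phase error) is uniformly dominated by $O(t^{-1/2+\varepsilon}K^{-2})$, and that the surviving main term is captured exactly by the closed form written for $\CI_1^\star$ with no leftover oscillation outside $V^\sharp$ and the displayed power of $Nr/(p(\tau+2t))$.
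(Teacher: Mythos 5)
Your proposal follows the paper's own argument essentially verbatim: two applications of Lemma \ref{Fourier Mellin} to $U^\dagger$ and $V^\dagger$, then stationary phase on the resulting $x$-integral with the same choices $\Theta_f=|\tau|$, $\Omega_f=\Omega_g=1$, $\Lambda=K$, the same localization $|\tau|\gg K^{1-\varepsilon}$ via the location of $x_0$ and the bound $|r|\ll pt^{1+\varepsilon}/N$, the same collapse $U^\sharp(1,y)V^\sharp(1/2,y)=V^\sharp(3/2,y)$ for the main term, and the same error bookkeeping absorbing everything into $O(t^{-1/2+\varepsilon}K^{-2})$. This is correct and coincides with the paper's proof, so no further comparison is needed.
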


\section{Cauchy-Schwarz and Poisson summation}

We break the $n$-sum into dyadic segments of size $C$ and write
\begin{equation*}
S^\star(N)\ll \sum_{\pm}\sum_{\delta=0, 1}\sum_{\substack{1\leq C\ll MP^2K^2/N\\ dyadic}} (S_{1,\delta}^\pm(N,C) + S_{2, \delta}^\pm(N,C)) + O(N^{-2018})
\end{equation*}
with 
\begin{equation}\label{S1}
\begin{split}
S_{1, \delta}^\pm(N,C) = \frac{ N^{2-it}\eta(M)}{P\sqrt{M}}&\int_{K^{1-\varepsilon}<|\tau|<Kt^\varepsilon}C_{\ell,\delta}(s) \sum_{p\in\CP}\frac{\chi(p)}{p^2} \sum_{n\asymp C} \lambda_g(n)U\bigg(\frac{n}{C}\bigg)\bigg(\frac{\sqrt{nN}}{p\sqrt{M}}\bigg)^{-s}\\
&\times \sum_{\substack{|r|\ll \frac{pt^{1+\varepsilon}}{N}\\ (r,p)=1}}e\bigg(\frac{\pm n\overline{rM}}{p}\bigg)\CI_1^\star(r,p,s)d\tau
\end{split}
\end{equation}
and
\begin{equation}
\begin{split}
S_{2,\delta}^\pm(N,C) = \frac{i^k N^{2-it}\eta(M)}{P\sqrt{M}}&\int_{|\tau|<Kt^\varepsilon}C_{\ell,\delta}(s)\sum_{p\in\CP}\frac{\chi(p)}{p^2} \sum_{n\asymp C} \lambda_g(n)U\bigg(\frac{n}{C}\bigg)\bigg(\frac{\sqrt{nN}}{p\sqrt{M}}\bigg)^{-s}\\
&\times \sum_{\substack{|r|\ll \frac{pt^{1+\varepsilon}}{N}\\ (r,p)=1}}e\bigg(\frac{\pm n\overline{rM}}{p}\bigg)\CI_2^\star(r,p,s)d\tau.
\end{split}
\end{equation}
We will next apply Cauchy-Schwarz inequality to $S_{j,\delta}^\pm(N,C)$ with the $n$-sum outside.

\subsection{First application of Cauchy-Schwarz and Poisson summation}
Using the bound $C_{\ell,\delta}(1+i\tau)\ll 1$, $|\eta(M)|=1$ and the Ramanujan bound on average,

\begin{equation}\label{first cauchy}
\begin{split}
S_{2,\delta}^\pm(N,C) \ll & \frac{N^{3/2}C^{1/2}t^\varepsilon}{P}\int_{|\tau|<Kt^\varepsilon} \bigg(\sum_{n\in\BZ}\frac{1}{n} U\bigg(\frac{n}{C}\bigg) \bigg| \sum_{p\in\CP}\frac{\chi(p)}{p} \sum_{\substack{|r|\ll \frac{pt^{1+\varepsilon}}{N}\\ (r,p)=1}}e\bigg(\frac{\pm n\overline{rM}}{p}\bigg)\CI_2^\star(r,p,s) \bigg|^2\bigg)^{1/2}d\tau \\& \qquad\qquad\qquad\qquad + O(N^{-2018}).
\end{split}
\end{equation}
Opening the absolute value squared, the sum inside the square-root in \eqref{first cauchy} is,
\begin{equation*}
\begin{split}
S_{2,\pm}^{\star\star}(N,C) := &\sum_{p_1\in\CP}\sum_{p_2\in\CP}\ \sum_{\substack{|r_1|\ll \frac{Pt^{1+\varepsilon}}{N}\\ (r_1,p_1)=1}}\ \sum_{\substack{|r_2|\ll \frac{Pt^{1+\varepsilon}}{N}\\ (r_2,p_2)=1}}\frac{\chi(p_1)\overline{\chi}(p_2)}{p_1p_2} \CI_2^\star(r_1,p_1,s)\overline{\CI_2^\star(r_2,p_2,s)}\\
&\times\sum_{n\asymp C} \frac{1}{n} e\bigg(\frac{\pm n\overline{Mr}_1}{p_1} \mp \frac{n\overline{Mr}_2}{p_2}\bigg)U\bigg(\frac{n}{C}\bigg).
\end{split}
\end{equation*}

Breaking the $n$-sum modulo $p_1p_2$ and applying Poisson summation to it,
\begin{equation*}
\begin{split}
S_{2,\pm}^{\star\star}(N, C) = & \sum_{p_1\in\CP}\sum_{p_2\in\CP}\ \sum_{\substack{|r_1|\ll \frac{Pt^{1+\varepsilon}}{N}\\ (r_1,p_1)=1}}\ \sum_{\substack{|r_2|\ll \frac{Pt^{1+\varepsilon}}{N}\\ (r_2,p_2)=1}}\frac{\chi(p_1)\overline{\chi}(p_2)}{p_1p_2}\CI_2^\star(r_1,p_1,s)\overline{\CI_2^\star(r_2,p_2,s)}\\
&\sum_{n\in\BZ} \delta(nM\pm\overline{r}_1p_2\mp\overline{r}_2p_1\equiv0\bmod p_1p_2) U^\dagger\bigg(\frac{nC}{p_1p_2}, 0\bigg).
\end{split}
\end{equation*}
We can therefore restrict the $n$-sum to the range $|n|\ll p_1p_2/C$. Taking absolute value and using the bound $U^\dagger \ll 1$,
\begin{equation*}
\begin{split}
S_{2,\pm}^{\star\star}(N, C) \ll \frac{1}{tK^4}\sum_{p_1\in\CP}\sum_{p_2\in\CP}\ \sum_{\substack{|r_1|\ll \frac{Pt^{1+\varepsilon}}{N}\\ (r_1,p_1)=1}}\ \sum_{\substack{|r_2|\ll \frac{Pt^{1+\varepsilon}}{N}\\ (r_2,p_2)=1}}\frac{1}{p_1p_2}\sum_{|n|\ll p_1p_2/C} \delta(nM\pm\overline{r}_1p_2\mp\overline{r}_2p_1\equiv0\bmod p_1p_2) + O(N^{-2018}).
\end{split}
\end{equation*}
When $n=0$, the congruence condition implies $p_1=p_2( \text{ say }=p)$ and $r_1\equiv r_2\bmod p$. On the other hand, if $n\neq 0$, the congruence conditions give $nM\equiv \mp\overline{r_1}p_2\bmod p_1$ and $nM\equiv \pm\overline{r_2}p_1\bmod p_2$. In the case $p_1=p_2=p$, this implies $p|n$. Such a non-zero $n$ exists only when $p^2/C > p$. Equivalently, $C$ is forced to go only up to $P$ for such a case to occur. Finally, in the case $n\neq0$ and $p_1\neq p_2$, the congruence condition implies that fixing $n, p_1, p_2$ fixes $r_j\bmod p_j$. Putting this together,
\begin{equation*}
S_{2,\pm}^{\star\star}(N,C)\ll\frac{t^{1+\varepsilon}}{K^4N^2} + \frac{t^{1+\varepsilon}P^2}{K^4N^2C},
\end{equation*}
where $n=0$ contributes the first term and $n\neq0$ contributes the second term. Plugging this into \eqref{first cauchy},
\begin{equation*}
S_{2,\delta}^\pm(N,C)\ll \frac{N^{3/2}C^{1/2}}{P}K\bigg(\frac{t^{1/2}}{K^2N} + \frac{t^{1/2}P}{K^2NC^{1/2}}\bigg).
\end{equation*}
Then,
\begin{equation*}
\sum_{\substack{1\leq C\ll MP^2K^2/N\\ dyadic}} S_{2,\delta}^\pm(N,C) \ll \frac{N^{1/2}t^{1/2}}{K}\bigg(\frac{KM^{1/2}}{N^{1/2}} + 1\bigg).
\end{equation*}
Since we will choose $KM^{1/2} > N^{1/2}$, we have the bound,
\begin{equation}\label{S2 contribution}
\sum_{\substack{1\leq C\ll MP^2K^2/N\\ dyadic}} S_{2,\delta}^\pm(N,C) \ll t^{1/2+\varepsilon}M^{1/2}.
\end{equation}

\begin{Remark}
Since $N\ll t^{1+\varepsilon}M^{1/2+\varepsilon}$, the condition $KM^{1/2}>N^{1/2}$ is satisfied by making sure $KM^{1/2}>t^{1/2+\varepsilon}M^{1/4+\varepsilon}$. In particular, we will choose $K=t^{2/3+\varepsilon}$ to get the bound \eqref{part 1} of Theorem \ref{main theorem}.

\end{Remark}

\subsection{Second application of Cauchy-Schwarz and Poisson}
Let $\{W_j\}_{j\in\CJ}$ be a set of smooth compactly supported functions such that $y^kW_j^{(k)}(y)\ll 1$ and $\sum_{j\in\CJ}W_j(y)=1$ for $|y|\in[K^{1-\varepsilon}, Kt^\varepsilon]$. We break the integral over $s$ to write $S_{1,\delta}^\pm(N,C)= \sum_{j\in\CJ}S_{1,\delta,j}^\pm(N,C)$ where
\begin{equation}
\begin{split}
S_{1,\delta,j}^\pm(N,C) = \frac{ N^{2-it}\eta(M)}{P\sqrt{M}}&\int_\BR W_j(\tau)C_{\ell,\delta}(s)\sum_{p\in\CP}\frac{\chi(p)}{p^2} \sum_{n\in\BZ} \lambda_g(n)U\bigg(\frac{n}{C}\bigg)\bigg(\frac{\sqrt{nN}}{p\sqrt{M}}\bigg)^{-s}\\
&\times \sum_{\substack{|r|\ll \frac{pt^{1+\varepsilon}}{N}\\ (r,p)=1}}e\bigg(\frac{\pm n\overline{rM}}{p}\bigg)\CI_1^\star(r,p,s)d\tau
\end{split}
\end{equation}

Applying Cauchy-Schwarz inequality to the $n$-sum and using $|\eta(M)|=1$ along with the Ramanujan bound on average,
\begin{equation}\label{second cauchy}
\begin{split}
S_{1,\delta,j}^\pm(N,C) \ll \frac{N^{3/2}C^{1/2}t^\varepsilon}{P} \bigg(\sum_{n\in\BZ}&\frac{1}{n} U\bigg(\frac{n}{C}\bigg) \bigg| \int_\BR W_j(\tau)\bigg(\frac{\sqrt{nN}}{p\sqrt{M}}\bigg)^{-i\tau}C_{\ell,\delta}(s)\\
&\times\sum_{p\in\CP}\frac{\chi(p)}{p} \sum_{\substack{|r|\ll \frac{pt^{1+\varepsilon}}{N}\\ (r,p)=1}}e\bigg(\frac{\pm n\overline{rM}}{p}\bigg)\CI_1^\star(r,p,s)d\tau \bigg|^2\bigg)^{1/2} + O(N^{-2018}).
\end{split}
\end{equation}
Opening the absolute value squared, the sum inside the square-root in \eqref{second cauchy} is
\begin{equation*}
\begin{split}
S_{1,\delta,j,\pm}^{\star\star}(N,C)& := \int\int (N/M)^{\frac{-i(\tau_1-\tau_2)}{2}}C_{\ell,\delta}(s_1)\overline{C_{\ell,\delta}(s_2)}W_j(\tau_1)W_j(\tau_2)\sum_{p_1\in\CP}\sum_{p_2\in\CP}\ \sum_{\substack{|r_1|\ll \frac{Pt^{1+\varepsilon}}{N}\\ (r_1,p_1)=1}}\ \sum_{\substack{|r_2|\ll \frac{Pt^{1+\varepsilon}}{N}\\ (r_2,p_2)=1}}\\&\frac{\chi(p_1)\overline{\chi}(p_2)}{p_1^{1-i\tau_1}p_2^{1+i\tau_2}} \CI_1^\star(r_1,p_1,s_1)\overline{\CI_1^\star(r_2,p_2,s_2)}\sum_{n\in\BZ} \frac{1}{n^{1-\frac{i(\tau_1-\tau_2)}{2}}} e\bigg(\frac{\pm n\overline{Mr}_1}{p_1} \mp \frac{n\overline{Mr}_2}{p_2}\bigg)U\bigg(\frac{n}{C}\bigg)d\tau_1 d\tau_2.
\end{split}
\end{equation*}
The sum over $n$ in the above sum is given by
\begin{equation*}
\sum_{n\in\BZ} \frac{1}{n^{1-\frac{i(\tau_1-\tau_2)}{2}}} e\bigg(\frac{\pm n\overline{Mr}_1}{p_1} \mp \frac{n\overline{Mr}_2}{p_2}\bigg)U\bigg(\frac{n}{C}\bigg).
\end{equation*}
Breaking the $n$-sum modulo $p_1p_2$ and applying Poisson summation to it, the $n$-sum transforms into
\begin{equation}\label{Poisson to n}
\sum_{n\in\BZ} \delta(nM\pm\overline{r}_1p_2\mp\overline{r}_2p_1\equiv0\bmod p_1p_2)C^{-\frac{i(\tau_1-\tau_2)}{2}}U^\dagger\bigg(\frac{nC}{p_1p_2}, -\frac{i(\tau_1-\tau_2)}{2}\bigg).
\end{equation}
Since $|\tau_i|\ll Kt^\varepsilon$, the bounds on $U^\dagger$ give arbitrary saving unless $|n|\ll p_1p_2Kt^\varepsilon/C$. Like earlier, we analyze the contribution of the terms with $n=0$ and $n\neq0$ separately.

\subsection{\texorpdfstring{Contribution of $n=0$}{Contribution of n=0}} When $n=0$, the above congruence condition implies $p_1=p_2 (=p \text{ say})$ and $r_1\equiv r_2\bmod p$. Therefore the contribution of the terms with $n=0$ towards $S_{1,j}^{\star\star}(N,C)$ equals
\begin{equation*}
\begin{split}
S_{1,\delta,j,\pm}^{\star\star}(N,C,0) := &\int\int (N/M)^{\frac{-i(\tau_1-\tau_2)}{2}}C_{\ell,\delta}(s_1)\overline{C_{\ell,\delta}(s_2)}W_j(\tau_1)W_j(\tau_2)\sum_{p\in\CP}\ \sum_{\substack{|r_1|\ll \frac{Pt^{1+\varepsilon}}{N}\\ (r_1,p)=1}}\ \sum_{\substack{|r_2|\ll \frac{Pt^{1+\varepsilon}}{N}\\ (r_2,p)=1}}\\&\frac{\chi(p_1)\overline{\chi}(p_2)}{p^{2-i(\tau_1-\tau_2)}} \CI_1^\star(r_1,p,s_1)\overline{\CI_1^\star(r_2,p,s_2)} C^{-\frac{i(\tau_1-\tau_2)}{2}}U^\dagger\bigg(0, -\frac{i(\tau_1-\tau_2)}{2}\bigg)d\tau_1 d\tau_2.
\end{split}
\end{equation*}
We start by observing that bounds on $U^\dagger$ give arbitrary saving if $|\tau_1-\tau_2|> t^\varepsilon$. We rewrite,
\begin{equation}\label{0 part}
\begin{split}
S_{1,\delta,j,\pm}^{\star\star}(N,C,0) = \sum_{p\in\CP}\ \sum_{\substack{|r_1|\ll \frac{Pt^{1+\varepsilon}}{N}\\ (r_1,p)=1}}\ \sum_{\substack{|r_2|\ll \frac{Pt^{1+\varepsilon}}{N}\\ (r_2,p)=1}}\frac{\chi(p_1)\overline{\chi}(p_2)}{p^2}\CI^{\star\star}(0)
\end{split}
\end{equation}
with
\begin{equation}\label{I star star 0}
\begin{split}
\CI^{\star\star}(0) &= \underset{|\tau_1-\tau_2|<t^\varepsilon}{\int\int} \bigg(\frac{\sqrt{NC}}{p\sqrt{M}}\bigg)^{-i(\tau_1-\tau_2)}C_{\ell,\delta}(s_1)\overline{C_{\ell,\delta}(s_2)}W_j(\tau_1)W_j(\tau_2)\CI_1^\star(r_1,p,s_1)\overline{\CI_1^\star(r_2,p,s_2)}\\
&\qquad\qquad\qquad\times U^\dagger\bigg(0, -\frac{i(\tau_1-\tau_2)}{2}\bigg)d\tau_1 d\tau_2\\
&= \int_{|h| < t^\varepsilon} \bigg(\frac{\sqrt{NC}}{p\sqrt{M}}\bigg)^{-ih}U^\dagger(0, -ih/2) \int_{\BR}C_{\ell,\delta}(1+i\tau)\overline{C_{\ell,\delta}(1+i(\tau+h))}W_j(\tau)W_j(\tau+h)\\
&\qquad\qquad\qquad\times\CI_1^\star(r_1,p,1+i\tau)\overline{\CI_1^\star(r_2,p,1+i(\tau+h))} d\tau\ dh
\end{split}
\end{equation}
We shall apply repeated integration by parts to the $\tau$-integral above. For that, we use Stirling's approximation \eqref{asymptotic for C} to write
\begin{equation}\label{Stirling}
C_{\ell,\delta}(1+i\tau) = \bigg(\frac{\tau}{8\pi e}\bigg)^{i\tau}\Phi(\tau), \quad \text{ with } \Phi'(\tau)\ll \frac{1}{1+|\tau|}.
\end{equation}
Using Lemma \ref{sp lemma} and \eqref{Stirling}, we write
\begin{equation}\label{I0 processed}
\begin{split}
\CI^{\star\star}(0) = \int_{|h| < t^\varepsilon} \bigg(\frac{\sqrt{NC}}{p\sqrt{M}}\bigg)^{-ih}U^\dagger(0, -ih/2)\int_{\BR} G(\tau, h) e(F(\tau, h))\bigg(\frac{r_1}{r_2}\bigg)^{i\tau} d\tau\ dh
\end{split}
\end{equation}
where we temporarily set
\begin{equation}
\begin{split}
2\pi F(\tau, h) &= \tau\log\tau-(\tau+h)\log(\tau+h) + \frac{(2t+\tau+h)}{4\pi}\log(2t+\tau+h) - \frac{(2t+\tau)}{4\pi}\log(2t+\tau) + H(h),\\
G(\tau, h) &= \frac{c_2Nr_1}{p(\tau+2t)^{3/2}K}\Phi(\tau)\Phi(\tau+h)W_j(\tau)W_j(\tau+h) V^\sharp\bigg(\frac{3}{2}, \frac{-p(\tau+2t)}{4\pi Nr_1}\bigg)V\bigg(\frac{Nr_1\tau}{p(\tau+2t)K}\bigg)\\
& \quad \times \frac{c_2Nr_2}{p(\tau+h+2t)^{3/2}K}V^\sharp\bigg(\frac{3}{2}, \frac{-p(\tau+h+2t)}{4\pi Nr_2}\bigg)V\bigg(\frac{Nr_2(\tau+h)}{p(\tau+h+2t)K}\bigg)
\end{split}
\end{equation}
for some function $H(h)$ which is independent of $\tau$. Then,
\begin{equation*}
\frac{\partial^j}{\partial\tau^j}G(\tau,h) \ll_j \frac{1}{tK^2}|\tau|^{-j} \quad \text{ and } \quad \frac{\partial^j}{\partial\tau^j}F(\tau,h) \ll_j \frac{h}{|\tau|^j}.
\end{equation*}
Applying repeated integration by parts to the $\tau$-integral in \eqref{I0 processed} (by integrating $(r_1/r_2)^{i\tau}$ and differentiating the rest), we have the bound
\begin{equation*}
\int G(\tau, h)e(F(\tau, h))\bigg(\frac{r_1}{r_2}\bigg)^{i\tau}d\tau \ll_j \bigg(\frac{h}{|\tau|\log(r_1/r_2)}\bigg)^j.
\end{equation*}
This gives arbitrary saving unless $\log(r_1/r_2)<h/|\tau|$. Since $|h|<t^\varepsilon, |\tau|\gg K^{1-\varepsilon}$ and $|r_i|\ll pt^{1+\varepsilon}/N$, we have $|r_1-r_2|\ll pt^{1+\varepsilon}/NK$. Since $r_1\equiv r_2\bmod p$, for a fixed choice of $r_1$, there are at most $\max\{1, t^{1+\varepsilon}/NK\}$ choices of $r_2$. Since we will choose $K=t^{2/3}$ and $N\gg t^{2/3+\varepsilon}$, this implies that there are at most $t^\varepsilon$-many choices of $r_2$ for a fixed $r_1$. Next, applying the trivial bound $G(\tau,h)\ll 1/tK^2$ and recalling that the length of the $\tau$-integral is of size $Kt^\varepsilon$, we have
\begin{equation}\label{Bound on I0}
\CI(0)\ll \frac{t^\varepsilon}{tK}.
\end{equation}
Using this bound in \eqref{0 part} along with the conditions on $r_i$,
\begin{equation}\label{main 0 contribution}
S_{1,\delta,j,\pm}^{\star\star}(N,C,0)\ll \frac{t^\varepsilon}{NK}.
\end{equation}

\subsection{\texorpdfstring{Contribution of $n\neq0$}{Contribution of n neq 0}} We next estimate the contribution of the terms having $n\neq0$ towards $S_{1,\delta,j,\pm}^{\star\star}(N,C)$. In case $p_1=p_2(=p \text{ say})$, the congruence condition in \eqref{Poisson to n} implies $p|n$. While in the case $p_1\neq p_2$, the congruence condition implies that fixing $n, p_1, p_2$, fixes $r_j\bmod p_j$. Therefore the contribution of the terms with $n\neq0$ towards $S_{1,\delta,j,\pm}^{\star\star}(N,C)$ equals
\begin{equation*}
\begin{split}
S_{1,\delta,j,\pm}^{\star\star}&(N,C,\star) := \int\int (N/M)^{\frac{-i(\tau_1-\tau_2)}{2}}C_{\ell,\delta}(s_1)\overline{C_{\ell,\delta}(s_2)}W_j(\tau_1)W_j(\tau_2)\sum_{p_1\in\CP}\sum_{p_2\in\CP}\ \sum_{\substack{|r_1|\ll \frac{Pt^{1+\varepsilon}}{N}\\ (r_1,p_1)=1}}\ \sum_{\substack{|r_2|\ll \frac{Pt^{1+\varepsilon}}{N}\\ (r_2,p_2)=1}}\\&\frac{\chi(p_1)\overline{\chi}(p_2)}{p_1^{1-i\tau_1}p_2^{1+i\tau_2}} \CI_1^\star(r_1,p_1,s_1)\overline{\CI_1^\star(r_2,p_2,s_2)}\underset{\substack{0\neq|n|\ll P^2Kt^\varepsilon/C\\ nM\equiv\mp\overline{r}_1p_2\bmod p_1\\ nM\equiv\pm\overline{r}_2p_1\bmod p_2}}{\sum} C^{-\frac{i(\tau_1-\tau_2)}{2}}U^\dagger\bigg(\frac{nC}{p_1p_2}, -\frac{i(\tau_1-\tau_2)}{2}\bigg)d\tau_1 d\tau_2.
\end{split}
\end{equation*}
We rewrite,
\begin{equation}\label{last main sum}
S_{1,\delta,j,\pm}^{\star\star}(N,C,\star) = \sum_{p_1\in\CP}\sum_{p_2\in\CP}\ \sum_{\substack{|r_1|\ll \frac{Pt^{1+\varepsilon}}{N}\\ (r_1,p_1)=1}}\ \sum_{\substack{|r_2|\ll \frac{Pt^{1+\varepsilon}}{N}\\ (r_2,p_2)=1}}\frac{\chi(p_1)\overline{\chi}(p_2)}{p_1p_2}\underset{\substack{0\neq|n|\ll P^2Kt^\varepsilon/C\\ nM\equiv\mp\overline{r}_1p_2\bmod p_1\\ nM\equiv\pm\overline{r}_2p_1\bmod p_2}}{\sum}\CI(n)
\end{equation}
with
\begin{equation}
\begin{split}
\CI(n) = &\int\int (NC/M)^{\frac{-i(\tau_1-\tau_2)}{2}}C_{\ell,\delta}(s_1)\overline{C_{\ell,\delta}(s_2)}W_j(\tau_1)W_j(\tau_2)\CI_1^\star(r_1,p_1,s_1)\overline{\CI_1^\star(r_2,p_2,s_2)}\\
&\qquad\times\frac{1}{p_1^{-i\tau_1}p_2^{i\tau_2}}U^\dagger\bigg(\frac{nC}{p_1p_2}, -\frac{i(\tau_1-\tau_2)}{2}\bigg) d\tau_1 \ d\tau_2.
\end{split}
\end{equation}
We next analyze the above integral using stationary phase analysis. Using lemma \ref{stationary phase}, we write,
\begin{equation}\label{U dagger last}
\begin{split}
U^\dagger\bigg(\frac{nC}{p_1p_2}, -\frac{i(\tau_1-\tau_2)}{2}\bigg) &= \frac{c_3}{( \tau_2 - \tau_1)^{1/2}} U\left(  \frac{(\tau_1 - \tau_2)p_1p_2}{4 \pi nC}\right) \left(  \frac{( \tau_1 - \tau_2) p_1p_2}{4 \pi enC}\right)^{ \frac{- i(\tau_1 - \tau_2)}{2}}\\
 & \qquad\qquad\qquad+ O\left( \min \left\lbrace \frac{1}{| \tau_1 - \tau_2|^{3/2}}, \frac{P^3}{(|n| C)^{3/2}} \right\rbrace \right).
\end{split}
\end{equation}
Using the trivial bound $\CI_1^\star(r,p,s)\ll 1/t^{1/2}K$, $C_{\ell,\delta}(1+i\tau)\ll 1$ and $W_j(\tau)\ll 1$, the contribution of the error term in \eqref{U dagger last} towards $\CI(n)$ is bounded by,
\begin{equation*}
B(C,n) = \frac{1}{K^2t} \underset{[K^{1-\varepsilon}, Kt^\varepsilon]^2}{\int\int} \min \left\lbrace \frac{1}{| \tau_1 - \tau_2|^{3/2}}, \frac{P^3}{(|n| C)^{3/2}} \right\rbrace d\tau_1 d\tau_2.
\end{equation*}
In the range $|\tau_1-\tau_2|\leq |n|C/P^2$,
\begin{equation*}
\begin{split}
\frac{1}{K^2t} \underset{\substack{[K^{1-\varepsilon}, Kt^\varepsilon]^2\\ |\tau_1-\tau_2|\leq |n|C/P^2}}{\int\int} \min \left\lbrace \frac{1}{| \tau_1 - \tau_2|^{3/2}}, \frac{P^3}{(|n| C)^{3/2}} \right\rbrace d\tau_1 d\tau_2 &= \frac{1}{K^2t}\underset{\substack{[K^{1-\varepsilon}, Kt^\varepsilon]^2\\ |\tau_1-\tau_2|\leq |n|C/P^2}}{\int\int} \frac{P^3}{(|n| C)^{3/2}}d\tau_1 d\tau_2\\
&\ll \frac{Pt^\varepsilon}{Kt(|n|C)^{1/2}}.
\end{split}
\end{equation*}
While in the range $|\tau_1-\tau_2|\geq |n|C/P^2$,
\begin{equation*}
\begin{split}
&\frac{1}{K^2t} \underset{\substack{[K^{1-\varepsilon}, Kt^\varepsilon]^2\\ |\tau_1-\tau_2|\geq |n|C/P^2}}{\int\int} \min \left\lbrace \frac{1}{| \tau_1 - \tau_2|^{3/2}}, \frac{P^3}{(|n| C)^{3/2}} \right\rbrace d\tau_1 d\tau_2 = \frac{1}{K^2t}\underset{\substack{[K^{1-\varepsilon}, Kt^\varepsilon]^2\\ |\tau_1-\tau_2|\geq |n|C/P^2}}{\int\int} \frac{1}{|\tau_1-\tau_2|^{3/2}} d\tau_1 d\tau_2\\
&\ll \frac{Pt^\varepsilon}{K^2t(|n|C)^{1/2}}\underset{\substack{[K^{1-\varepsilon}, Kt^\varepsilon]^2\\ |\tau_1-\tau_2|\geq |n|C/P^2}}{\int\int} \frac{1}{|\tau_1-\tau_2|^{1-\varepsilon}} d\tau_1 d\tau_2 \ll \frac{Pt^\varepsilon}{Kt(|n|C)^{1/2}}.
\end{split}
\end{equation*}
Therefore,
\begin{equation}\label{B(C,n)}
B(C,n) = O\bigg(\frac{Pt^\varepsilon}{Kt(|n|C)^{1/2}}\bigg).
\end{equation}
Finally, we estimate the contribution of the main term of \eqref{U dagger last} towards $\CI(n)$. By Fourier inversion formula, we have 
\begin{align} \label{Fourier inversion}
\left(\frac{4 \pi nC}{ (\tau_1 - \tau_2) p_1p_2} \right)^{1/2} U\left(\frac{(\tau_1 - \tau_2) p_1p_2}{ 4 \pi nL} \right) = \int_{\BR} U^\dagger \left(z, \frac{1}{2} \right) e \left(\frac{(\tau_1 - \tau_2) p_1p_2}{ 4 \pi nC}  z\right) dz.
\end{align}
Using the above, Lemma \ref{sp lemma}, \eqref{Stirling} and the bound \eqref{B(C,n)}, 
\begin{equation}
\CI(n) = c_4\bigg(\frac{p_1p_2}{4\pi nC}\bigg)^{1/2}\int_\BR U^\dagger\bigg(z,  \frac{1}{2}\bigg) \underset{\BR^2}{\int\int}G(\tau_1, \tau_2)e(F(\tau_1, \tau_2))d\tau_1 d\tau_2 + O(B(C,n))
\end{equation}
for some constant $c_4$ with
\begin{equation*}
\begin{split}
2\pi F(\tau_1, \tau_2) &= (\tau_1\log\tau_1-\tau_2\log\tau_2) - 1/2[(\tau_1+2t)\log(\tau_1+2t)-(\tau_2+2t)\log(\tau_2+2t)]\\&\qquad  -(\tau_1-\tau_2)\log(\tau_1-\tau_2) + H(\tau_1, \tau_2), \\
G(\tau_1, \tau_2) &= \frac{c_2Nr_1}{p_1(\tau_1+2t)^{3/2}K}\Phi(\tau_1)\Phi(\tau_2)W_j(\tau_1)W_j(\tau_2) V^\sharp\bigg(\frac{3}{2}, \frac{-p_1(\tau_1+2t)}{4\pi Nr_1}\bigg)V\bigg(\frac{Nr_1\tau_1}{p_1(\tau_1+2t)K}\bigg)\\
& \quad \times \frac{c_2Nr_2}{p_2(\tau_2+2t)^{3/2}K}V^\sharp\bigg(\frac{3}{2}, \frac{-p_2(\tau_2+2t)}{4\pi Nr_2}\bigg)V\bigg(\frac{Nr_2\tau_2}{p_2(\tau_2+2t)K}\bigg).
\end{split}
\end{equation*}
Here $H(\tau_1,\tau_2)$ is a function that satisfies $\frac{\partial^2}{\partial\tau_i\partial\tau_j}H(\tau_1,\tau_2)=0$ for $1\leq i,j\leq2$. Therefore we have,
\begin{equation*}
\begin{split}
2\pi\frac{\partial^2}{\partial\tau_1^2}F(\tau_1, \tau_2) &= \frac{1}{\tau_1} - \frac{1}{2(\tau_1+2t)} - \frac{1}{2(\tau_1-\tau_2)}, \quad 2\pi\frac{\partial^2}{\partial\tau_2^2}F(\tau_1, \tau_2) = \frac{-1}{\tau_2} + \frac{1}{2(\tau_2+2t)} - \frac{1}{2(\tau_1-\tau_2)},\\
2\pi\frac{\partial^2}{\partial\tau_1\tau_2}F(\tau_1, \tau_2) &= \frac{1}{2(\tau_1-\tau_2)}.
\end{split}
\end{equation*}
Since $\tau\gg K^{1-\varepsilon}$ and $K<t^{1-\varepsilon}$, 
\begin{equation*}
4\pi^2\bigg[\frac{\partial^2}{\partial\tau_1^2}\frac{\partial^2}{\partial\tau_2^2} - \bigg(\frac{\partial^2}{\partial\tau_1\tau_2}\bigg)^2\bigg] F(\tau_1, \tau_2) = \frac{-1}{2\tau_1\tau_2} + O(t^\varepsilon/tK).
\end{equation*}
Moreover, the variance of $G(\tau_1,\tau_2)$ in its support is bounded as $var(G)\ll t^\varepsilon/tK^2$. Therefore applying lemma \ref{double expo sum} and integrating trivially over $z$ by using the rapid decay of Fourier transform, the contribution of the main term towards $\CI(n)$ is also bounded by $O(Pt^\varepsilon/tK(|n|C)^{1/2})$. Everything together,
\begin{equation*}
\CI(n) \ll \frac{Pt^\varepsilon}{Kt(|n|C)^{1/2}}.
\end{equation*}
Plugging this into \eqref{last main sum},
\begin{equation*}
S_{1,\delta,j,\pm}^{\star\star}(N,C,\star) \ll \sum_{p_1\in\CP}\sum_{p_2\in\CP}\ \sum_{\substack{|r_1|\ll \frac{Pt^{1+\varepsilon}}{N}\\ (r_1,p_1)=1}}\ \sum_{\substack{|r_2|\ll \frac{Pt^{1+\varepsilon}}{N}\\ (r_2,p_2)=1}}\frac{1}{p_1p_2}\underset{\substack{0\neq|n|\ll P^2Kt^\varepsilon/C\\ nM\equiv\mp\overline{r}_1p_2\bmod p_1\\ nM\equiv\pm\overline{r}_2p_1\bmod p_2}}{\sum}\frac{Pt^\varepsilon}{Kt(|n|C)^{1/2}}\ll \frac{P^2t^{\varepsilon}}{tK^{1/2}C}\max\bigg\{1, \frac{t^2}{N^2}\bigg\}.
\end{equation*}
Using this and \eqref{main 0 contribution} in \eqref{second cauchy},
\begin{equation*}
S_{1,\delta,j}^\pm(N,C)\ll \frac{N^{3/2}C^{1/2}t^\varepsilon}{P}\bigg(\frac{1}{(NK)^{1/2}} + \frac{Pt^{1/2}}{NK^{1/4}C^{1/2}}\delta(N<t) + \frac{P}{t^{1/2}K^{1/4}C^{1/2}}\delta(t< N)\bigg).
\end{equation*}
Summing over $j$ and $C$ dyadically,
\begin{equation}\label{S1 contribution}
\sum_{j\in\CJ}\sum_{\substack{1\leq C\ll MP^2K^2/N\\ dyadic}} S_{1,\delta,j}^\pm(N,C) \ll N^{1/2}t^{\varepsilon}\bigg(K^{1/2}M^{1/2} + \frac{t^{1/2}}{K^{1/4}}\delta(N<t) + \frac{N}{t^{1/2}K^{1/4}}\delta(N>t) \bigg).
\end{equation}
Combining the bounds in \eqref{Sflat}, \eqref{S2 contribution} and \eqref{S1 contribution} proves proposition \ref{main prop}.

\subsection*{Acknowledgements} The author would like to thank Dr. Yongxiao Lin and Prof. Roman Holowinsky for many helpful discussions and careful reading. He would further like to thank Prof. Holowinsky for his support and encouragement. This paper branched out while working with Lin and Holowinsky to apply the method of degeneration to frequency zero to $SL(2,\BZ)$ $L$-functions as done in \cite{Lin, HN17}.

\bibliographystyle{abbrv}
\bibliography{ref}

\end{document}